\documentclass[11pt, letterpaper, oneside]{article}

\headheight=8pt \topmargin=-30pt \textheight=624pt
\textwidth=432pt \oddsidemargin=18pt \evensidemargin=18pt

\usepackage{latexsym, amsmath, amssymb, amsfonts, amscd,bm}
\usepackage{amsthm}
\usepackage{t1enc}
\usepackage[mathscr]{eucal}
\usepackage{indentfirst}
\usepackage{graphicx, pb-diagram}
\usepackage{fancyhdr}
\usepackage{fancybox}
\usepackage{paralist}
\usepackage{framed}
\usepackage{color}
\usepackage[all]{xy}
\usepackage{hyperref}
\usepackage{url}
\usepackage{booktabs}
\usepackage{float}

\theoremstyle{plain}
\newtheorem{thm}{Theorem}[section]

\newtheorem{prop}[thm]{Proposition}

\newtheorem{lemma}[thm]{Lemma}
\newtheorem{cor}[thm]{Corollary}

\theoremstyle{definition}
\newtheorem{defi}[thm]{Definition}

\newtheorem{remark}[thm]{Remark}
\newtheorem{ep}[thm]{Example}

\newtheorem{constr}[thm]{Construction}

\newcommand{\RR}{\ensuremath{\mathbb R}}
\newcommand{\g}{\ensuremath{\mathfrak{g}}}

\newcommand{\li}{\ensuremath{L_{\infty}}}

%%%% For comments 

%%%% For colored text

\newcommand{\cL}{\mathcal{L}}

\newcommand{\cC}{\mathcal{C}}
\newcommand{\cA}{\mathcal{A}}

\newcommand{\cP}{\mathcal{P}}

\definecolor{forest}{rgb}{0,0.5,0}

\newcommand{\vs}{\varsigma}

\newcommand{\ham}[1]{\Omega^{#1}_{\mathrm{Ham}}\left(M\right)}
\newcommand{\ip}[1]{\iota_{v_{#1}}}

\newcommand{\alphak}[1]{\alpha_{1} \otimes \cdots \otimes \alpha_{#1}}
\newcommand{\alphadk}[1]{\alpha_{1},\hdots,\alpha_{#1}}

\newcommand{\vk}[1]{v_{\alpha_{1}} \wedge \cdots \wedge v_{\alpha_{#1}}}

\DeclareMathOperator{\ima}{im}

\setlength{\parindent}{0pt}

\begin{document}

\title{Conserved quantities on multisymplectic manifolds}

 \author{Leonid Ryvkin\footnote{Fakult\"at f\"ur Mathematik, Ruhr-Universit\"at Bochum, Universit\"atsstr. 150, 44801 Bochum, Germany, \texttt{leonid.ryvkin@rub.de}.},
 Tilmann Wurzbacher\footnote{ Institut \'{E}lie Cartan Lorraine, Universit\'{e} de Lorraine et C.N.R.S., Ile de Saulcy, 57045 Metz, France, 
 \texttt{tilmann.wurzbacher@univ-lorraine.fr}.},
 Marco Zambon\footnote{KU Leuven, Department of Mathematics, Celestijnenlaan 200B box 2400, BE-3001 Leuven, Belgium, \texttt{marco.zambon@kuleuven.be}.}
 }

\date{}

\maketitle

\begin{abstract}
Given a vector field on a manifold $M$, we define a \emph{globally conserved quantity} to be a differential form whose Lie derivative is exact. Integrals of conserved quantities over suitable submanifolds are constant under time evolution, the Kelvin circulation theorem being a well-known special case. 
More generally,   conserved quantities are well-behaved under transgression to spaces of maps into $M$.

We focus on the case of multisymplectic manifolds and Hamiltonian vector fields. We show that in the presence of a Lie group of symmetries admitting a homotopy co-momentum map, one obtains a whole family of globally conserved quantities. This extends a classical result in symplectic geometry. We carry this out in a general setting, considering several variants of the notion of globally conserved quantity.
\end{abstract}

\tableofcontents

\footnote{2010 Mathematics Subject Classification: primary 37K05 53D05, secondary 70H33}

\section*{Introduction}
\addcontentsline{toc}{section}{Introduction}

Conserved quantities (or ``conservation laws'') play a large r\^ole in variational problems and related areas, as continuum mechanics. \\

Their mathematical formulation as well as application
is most transparent in the case of classical point mechanics, in its symplectic (or Hamiltonian) presentation.
Given a symplectic manifold $(M,\omega)$ and a Hamiltonian function $H$ in $C^{\infty}(M,\mathbb{R})=
\Omega^0(M)$, a function $f$ on $M$ is a ``conserved quantity'' if the Lie derivative ${\mathcal L}_{v_H}(f)=
-\{H,f\}$ vanishes, where $v_H$ is the Hamiltonian vector field associated to $H$ (i.e., fulfilling $\iota_{v_H}\omega
=-dH$) and $\{\,,\}$ is the Poisson bracket of $(M,\omega)$. If $df$ is different from zero, the dimension of the
phase space $(M,\omega)$ can be reduced by two and the associated Hamilton equation
descends to the reduction. Iterating this process leads  -- at least locally -- to the essentially trivial problem
of solving a Hamilton equation on the real plane with its standard symplectic form. A typical source of conserved quantities is given by the  Noether mechanism, here very simple:
if a finite-dimensional Lie algebra $\mathfrak g$ acts on $(M,\omega)$ with a (co-)momentum map and the Hamiltonian function is $\mathfrak g$-invariant, then the
image of every element of $\mathfrak g$ under the co-momentum is a conserved quantity.\\

The advent of a mathematical rigorous framework for observables and symmetries on a 
multisymplectic manifold $(M,\omega)$ -- i.e., a manifold with a closed, non-degenerate $n{+}1$-form for $n\geq 1$ (cf. \cite{RogersL,RogersPre}) -- raises the question whether the above generalizes from symplectic to multisymplectic geometry.
Accordingly, we consider the set-up of a multisymplectic manifold $(M,\omega)$ and a
``Hamiltonian form'' $H\in \Omega^{n-1}(M)$, allowing for a vector field $v_H$ such that $\iota_{v_H}\omega=-dH$. We call a differential form $\alpha \in \Omega^{\bullet}(M)$
``strictly conserved by $H$ (or under $v_H$)'' if $\mathcal{L}_{v_H}\alpha=0$.
Working with forms rather than functions, we immediately have two natural weakened notions:
``global conservation'' resp. ``local conservation'' in case $\mathcal{L}_{v_H}\alpha$ is exact resp. closed.
Since conserved quantities are typically considered in integrated form, it is often enough that a ``quantity is
preserved up to a total divergence'', which corresponds to these two weakened notions, that are less interesting in the
symplectic case. Among these three kinds of conserved quantities, the one we consider most useful are the globally conserved quantities.\\

Our main goal is to understand to which extent a homotopy co-momentum associated to a multisymplectic action of a finite-dimensional Lie algebra $\mathfrak g$ (cf. \cite{FRZ}, \cite{LeonidTillmannComoment}) furnishes conserved quantities if $\mathfrak g$ keeps 
the Hamiltonian form $H$ invariant. Given the more involved algebraic 
structure of the observables (and of the homotopy co-momentum), we find the following as the ``correct'' generalization of the above conservation law on symplectic manifolds (see Prop. \ref{prop:conserved}):\\

{\bf Proposition.}  Let $(M,\omega)$ be a multisymplectic manifold, $H\in \ham{n-1}$ and $(f)$ a homotopy co-momentum map for an infinitesimal action $\mathfrak g\to \mathfrak X(M)$ which leaves $H$ invariant. Let $p$ be a $k$-cycle in 
the complex defining the Lie algebra homology of $\mathfrak g$. Then $f_k(p)$ is a globally conserved quantity.\\

In fact, denoting the cycles of Lie algebra homology by $Z_k(\mathfrak g)$ and the boundaries by $B_k(\mathfrak g)$, we have the following table, {whose last column reflects the above proposition} (see Definition \ref{def:pres} for the three notions of
preservation of H by the action of the 
Lie algebra $\g$): 
\begin{table}[H]
 \centering
 \begin{tabular}{ l | c| c | c }
 \toprule
 & $H$ locally $\g$-preserved & $H$ globally $\g$-preserved & $H$ strictly $\g$-preserved \\
 \hline
 $f_k(Z_k(\g))$ & locally conserved & locally conserved & globally conserved \\ 
 $f_k(B_k(\g))$ & globally conserved & globally conserved & globally conserved \\ 
 \bottomrule
\end{tabular}
 \end{table}
We underline that all implications in the table are sharp, as we show by explicit examples. {Further, our results hold even relaxing the assumption that $\omega$ be multisymplectic, allowing $\omega$ to be any closed $n+1$-form.}\\

In discussing conserved quantities (in the flavors strong/global/local), it turns out to be useful to first work in the 
more general situation of a manifold $M$ together with a vector field $v$ and to discuss differential forms ``preserved 
by this continuous dynamical system''. In this general context, we associate ``integral invariants'' to conserved quantities 
by integrating the conserved forms over manifolds that are smoothly mapped to $M$. We obtain a very general form of 
Kelvin's classical circulation theorem, that should be of use in continuum mechanics beyond the case of isentropic,
incompressible fluids. More precisely we have (compare with Proposition \ref{prop:unchanged}):\\

{\bf Proposition.}
Let $\Sigma$ be a compact, oriented $d$-dimensional manifold (without boundary), $v$ a vector field on $M$ with flow $\phi_t$,
and $\sigma_0\colon \Sigma\to M$ a smooth map. Consider $\sigma_t:=\phi_t\circ \sigma_0\colon \Sigma\to M$.
If $\alpha\in \Omega^d(M)$ is a differential form, 
then the number
$$\int_{\Sigma}(\sigma_t)^*\alpha$$ is independent of the time parameter $t$ if 
$\alpha$ is globally conserved by $v$.\\ 

The above proposition makes apparent, in a geometric way, the usefulness of conserved quantities.\\

Let us now describe the content of the different sections in more detail. 
In \S \ref{sec:consmulti} we introduce   multisymplectic manifolds and define the various notions of conserved quantities.
The heart of this note is 
\S \ref{sec:conshomo}: given an action on multisymplectic manifold that preserves (in one of the ways we make precise) a Hamiltonian form, we show that certain components of
the homotopy co-momentum map are conserved quantities. Further, in \S \ref{subsec:homol} we provide an alternative, homological approach to prove these statements. 
In \S \ref{sec:exconstr} we explain how the set-up needed in the previous section arises naturally,   we make  remarks on conserved quantities, and provide in \S \ref{subsec:magnetic} an example, announced in \S \ref{sec:strict}, exhibiting a globally conserved quantity (with respect to a strictly $H$-preserving action) that is not strictly conserved. Finally,  
\S \ref{sec:applications} is devoted to applications: we present a general version of Kelvin's circulation theorem, and more generally we show that conserved quantities on a manifold $M$ induce conserved quantities on  spaces of maps into $M$.\\

For a discussion of the relation between conserved quantities in multisymplectic geometry on one side and classical field theory on the other side,
we refer to Schreiber \cite[\S 1.2.11]{UrsLong}.\\

{\bf Acknowledgements:} M.Z. thanks Chris Rogers for constructive comments, especially during the first phase of the project, and Daniel Peralta Salas for making him aware of the Kelvin Circulation Theorem. M.Z. was partially supported by grants MTM2011-22612 and ICMAT Severo Ochoa SEV-2011-0087 (Spain), 
Pesquisador Visitante Especial grant 88881.030367/2013-01 (CAPES/Brazil), by IAP Dygest (Belgium), by
the long term structural funding Methusalem grant of the Flemish Government. We thank the IECL Metz-Nancy (UMR 7502, France), KU Leuven and Ruhr-University Bochum (via its Research School Plus, funded by Germany's Excellence Initiative [DFG GSC 98/3]) for making 
possible mutual visits of the authors.

\section{Conserved quantities in multisymplectic geometry}\label{sec:consmulti}
The purpose of this section is to address conserved quantities associated to a vector field on a manifold. We will be mainly interested in the case that the manifold carries a multisymplectic structure and the vector field is Hamiltonian. We 
introduce these notions in \S \ref{subsec:introm}  and \S \ref{subsec:def-conserved} , and display some algebraic properties of conserved quantities in \S
\ref{subsec:alg}.

\subsection{Multisymplectic manifolds}\label{subsec:introm}
\begin{defi}
A manifold $M$ equipped with a closed $n{+}1$-form $\omega\in\Omega^{n+1}(M)$ is called a \emph{pre-$n$-plectic manifold}. It is called an \emph{$n$-plectic} or \emph{multisymplectic manifold} if the following map is injective for all $p\in M$,
\[
{T_pM\to \Lambda^nT^*_pM,\;\; v\mapsto \iota_v\omega_p}.
\]
\end{defi}

 \begin{defi} \label{Hamiltonian}
Let $(M,\omega)$ be a pre-$n$-plectic manifold. An $(n{-}1)$-form $\alpha$
is called \emph{Hamiltonian} if there exists a vector field $v_\alpha \in \mathfrak{X}(M)$ such that
\[
d\alpha= -\ip{\alpha} \omega.
\]
We say that $v_\alpha$ is a \emph{Hamiltonian vector field} for $\alpha$. In the $n$-plectic case $v_\alpha$ is unique. The set of Hamiltonian $(n{-}1)$-forms is denoted as $\ham{n-1}$.
\end{defi}

\begin{remark}
Observe that if $v_\alpha$ is a Hamiltonian vector field corresponding to $\alpha$, then $\cL_{v_\alpha}\omega=0$ by Cartan's formula.
\end{remark}

The following $L_{\infty}$-algebra was constructed for $n$-plectic manifolds in \cite[Thm. 5.2]{RogersL} and generalized to the pre-$n$-plectic case in \cite[Thm. 6.7]{HDirac} 
\begin{defi}\label{def:lichris} 
Given a pre-$n$-plectic manifold $(M,\omega)$, the \emph{Lie $n$-algebra of observables}
$L_{\infty}(M,\omega)=(L,\{l_{k} \})$ is the graded vector space given by 
\[
L_{i} =
\begin{cases}
\ham{n-1} & i=0,\\
\Omega^{n-1-i}(M) & 0 < i \leq n-1,
\end{cases}
\]
together with the maps $\left \{l_{k} \colon L^{\otimes k} \to L~|~ 1
 \leq k\leq n{+}1 \right\}$ given by
\[ 
l_{1}(\alpha)=d\alpha \qquad \text{if } \deg{\alpha}>0,
\]
$l_1(\alpha)=0$ for $\deg{\alpha}=0$, and for all $k>1$ 
\[
l_{k}(\alphadk{k}) =
\begin{cases}
0 & \text{if $\deg{\alphak{k}} > 0$}, \\
\vs(k) \iota(\vk{k}) \omega & \text{if
 $\deg{\alphak{k}}=0$}, 
 \end{cases}
\]
 where $v_{\alpha_{i}}$ is a Hamiltonian vector field
associated to $\alpha_{i} \in \ham{n-1}$ and
 $\varsigma(k)=-(-1)^{k(k+1)/2}$.
Here the contraction with multivector fields is defined by $\iota(\vk{k})=\iota_{v_{\alpha_k}}\dots \iota_{v_{\alpha_1}}$.
\end{defi}

\subsection{Conserved quantities}\label{subsec:def-conserved}
\begin{framed}
\begin{defi}\label{def:consq}
Let $M$ be a manifold and $v$ a vector field on $M$. A form $\alpha \in \Omega^\bullet(M)$ is called a 
\begin{compactenum}[(a)]
	\item \emph{locally conserved quantity} if $\cL_{v}\alpha$ is a closed form,
	\item \emph{globally conserved quantity} if $\cL_{v}\alpha$ is an exact form,
	\item \emph{strictly conserved quantity} if $\cL_{v}\alpha=0$.
\end{compactenum}
We denote the graded vector spaces of those quantities by $\cC_{loc}(v)$ resp. $\cC(v)$ and $\cC_{str}(v)$.
\end{defi}
\end{framed}

The following inclusions follow directly from Cartan's formula.

\begin{lemma} \label{conserved-properties}Let $M$ be a manifold and $v$ a vector field on $M$. Then we have
\begin{compactenum}[(i)]
\item $\cC_{str}(v)\subset \cC(v)\subset \cC_{loc}(v)$,
\item $\Omega^\bullet_{cl}(M)\subset \cC(v)$,
\item $d( \cC_{loc}(v))\subset \cC_{str}(v)$.
\end{compactenum}
\end{lemma}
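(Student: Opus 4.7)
The plan is to verify each of the three inclusions by a direct application of Cartan's magic formula $\cL_v = d\iota_v + \iota_v d$ together with the elementary identity $d \circ d = 0$ and the fact that $d$ commutes with $\cL_v$.

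For (i), I would start from the definitions: if $\cL_v\alpha = 0$, then $\cL_v\alpha$ is trivially exact (being $d0$), which gives $\cC_{str}(v) \subset \cC(v)$. The second inclusion $\cC(v)\subset \cC_{loc}(v)$ is immediate from the fact that exact forms are closed. Neither step uses Cartan's formula, so this part is really just unravelling the definitions.

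For (ii), I would apply Cartan's formula to a closed form $\alpha$: since $d\alpha = 0$, one gets $\cL_v\alpha = d\iota_v\alpha + \iota_v d\alpha = d(\iota_v\alpha)$, exhibiting $\cL_v\alpha$ as an exact form. Hence $\alpha\in \cC(v)$.

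For (iii), the key observation is that $d$ and $\cL_v$ commute (as is standard and again follows from Cartan's formula). So for $\alpha\in \cC_{loc}(v)$, meaning $d\cL_v\alpha = 0$, I would compute $\cL_v(d\alpha) = d(\cL_v\alpha) = 0$, which shows $d\alpha\in \cC_{str}(v)$.

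I do not foresee any real obstacle here: all three items reduce to one-line verifications, and the main point of the lemma is organizational — it fixes the hierarchy of the three notions introduced in Definition \ref{def:consq} and records that closed forms are automatically globally conserved while the exterior derivative upgrades local conservation to strict conservation.
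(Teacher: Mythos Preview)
Your proposal is correct and matches the paper's approach exactly: the paper simply states that the inclusions ``follow directly from Cartan's formula'' without further elaboration, and your three one-line verifications are precisely the intended unpacking of that remark.
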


We will be especially interested in the case  where $(M,\omega)$ is pre-$n$-plectic and $v$ preserves $\omega$. In this case additional results hold.

\begin{lemma} \label{conserved-properties2}Let $(M,\omega)$ be a pre-$n$-plectic manifold and $v$ a vector field on $M$ such that $\mathcal L_v\omega=0$. Then we have
\begin{compactenum}[(i)]
\item $\alpha\in \ham{n-1}$ is locally conserved by $v$ if and only if $\iota_{[v_\alpha,v]}\omega=0$, for some (or equivalently for {every}) Hamiltonian vector field $v_\alpha$ for $\alpha$.
\end{compactenum}
If moreover $v=v_H$ is a Hamiltonian vector field for $H\in \ham{n-1}$, then 
\begin{compactenum}[(i)]\setcounter{enumi}{1}
\item $\alpha\in \ham{n-1}$ is locally conserved by $v_H$ if and only if $\cL_{v_\alpha}H$ is closed {for some (or equivalently for every)} Hamiltonian vector field $v_\alpha$ for $\alpha$.
\item $\alpha\in \ham{n-1}$ is {globally} conserved by $v_H$ if and only if $\cL_{v_\alpha}H$ is exact {for some (or equivalently for every)} Hamiltonian vector field $v_\alpha$ for $\alpha$.
\item $H\in \cC(v_H)$.

\end{compactenum}
\end{lemma}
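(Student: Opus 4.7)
The plan is to derive all four parts from Cartan-calculus manipulations, the core ingredients being the Hamilton equations $d\alpha = -\iota_{v_\alpha}\omega$ and $dH = -\iota_{v_H}\omega$, the commutator identity $[\mathcal{L}_X, \iota_Y] = \iota_{[X,Y]}$, the hypothesis $\mathcal{L}_v\omega = 0$, and the fact (noted in the remark following Definition \ref{Hamiltonian}) that every Hamiltonian vector field preserves $\omega$. Before starting, I would settle the ``some iff every'' clauses in (i)--(iii): if $v_\alpha$ and $v_\alpha'$ are two Hamiltonian vector fields for the same $\alpha$, their difference $w$ satisfies $\iota_w\omega = 0$, and then $\iota_{[w,v]}\omega = \mathcal{L}_v\iota_w\omega - \iota_w\mathcal{L}_v\omega = 0$ handles (i), while Cartan's formula gives $\mathcal{L}_w H = d\iota_w H + \iota_{v_H}\iota_w\omega = d\iota_w H$, an exact (and hence closed) form, which settles (ii) and (iii).

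For (i) I would compute
\[
d\mathcal{L}_v\alpha \;=\; \mathcal{L}_v d\alpha \;=\; -\mathcal{L}_v\iota_{v_\alpha}\omega \;=\; -\iota_{[v,v_\alpha]}\omega - \iota_{v_\alpha}\mathcal{L}_v\omega \;=\; \iota_{[v_\alpha,v]}\omega,
\]
so closedness of $\mathcal{L}_v\alpha$ is precisely the vanishing of $\iota_{[v_\alpha,v]}\omega$. For (ii) I would perform the symmetric computation, using that $v_\alpha$ itself preserves $\omega$:
\[
d\mathcal{L}_{v_\alpha}H \;=\; -\mathcal{L}_{v_\alpha}\iota_{v_H}\omega \;=\; -\iota_{[v_\alpha,v_H]}\omega.
\]
Combined with (i) applied to $v=v_H$, this shows that $\mathcal{L}_{v_H}\alpha$ and $-\mathcal{L}_{v_\alpha}H$ have the same exterior derivative and are thus simultaneously closed.

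Part (iii) rests on the identity
\[
\mathcal{L}_{v_H}\alpha + \mathcal{L}_{v_\alpha}H \;=\; d(\iota_{v_H}\alpha + \iota_{v_\alpha}H),
\]
which I would obtain by expanding both Lie derivatives via Cartan's formula, substituting the two Hamilton equations, and invoking the anticommutation $\iota_{v_H}\iota_{v_\alpha}\omega + \iota_{v_\alpha}\iota_{v_H}\omega = 0$. The identity exhibits $\mathcal{L}_{v_H}\alpha$ and $-\mathcal{L}_{v_\alpha}H$ as differing by an exact form, so one is exact iff the other is. Finally, (iv) is the one-line computation $\mathcal{L}_{v_H}H = d\iota_{v_H}H - \iota_{v_H}\iota_{v_H}\omega = d\iota_{v_H}H$, which is manifestly exact. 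The only real subtlety is the non-uniqueness of $v_\alpha$ in the pre-multisymplectic setting; beyond disposing of that at the outset, the whole argument is a careful bookkeeping exercise with Cartan's magic formula.
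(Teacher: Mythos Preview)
Your proof is correct and follows exactly the approach the paper sketches: part (i) via the commutator identity $[\mathcal L_X,\iota_Y]=\iota_{[X,Y]}$ applied to $\omega$, and parts (ii)--(iv) via Cartan's formula together with the Hamilton equations. Your write-up is simply more detailed than the paper's two-line proof, and you have made explicit the ``some versus every'' independence from the choice of $v_\alpha$, which the paper leaves implicit.
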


\begin{proof}
 Assertion (i) follows from the identity $\mathcal L_X\circ \iota_Y=\iota_Y \circ\mathcal L_X+\iota_{[X,Y]}$ applied to $\omega$. Assertions (ii) - (iv) follow from Cartan's formula.
\end{proof}

\begin{remark}
We observe that the closeness resp. exactness of $\cL_{v_\alpha}H$ is equivalent to the closeness resp. exactness of $l_2(\alpha,H)$.
\end{remark}

As the following example illustrates, even in the $n$-plectic case, in general $\cL_{v_H}H\neq 0$.

\begin{ep}
Let $M=\mathbb R^3$, $\omega=dx\wedge dy\wedge dz$ and $H=xdy+zdz$. Then $v_H=-\frac{\partial}{\partial z}$, so $\iota_{v_H}H=-z$ and $\cL_{v_H}H=-dz$.
\end{ep}
\begin{remark}
In the symplectic (i.e. the 1-plectic) case with $H\in C^\infty(M)=\Omega^0_{Ham}(M)$ we have the following statements for $f\in C^\infty(M)$ and $v=v_H$:
\begin{compactenum}[(1)]
\item $f $ is \emph{{globally} conserved} if and only if $f$ is \emph{strictly conserved} and this is the case, if and only if $\{H,f\}=0$,
\item $f$ is \emph{locally conserved} if and only if $\{H,f\}$ is locally constant.
\end{compactenum}

As the following example shows, in the symplectic situation local conservedness does not suffice to formulate a ``conservation law''.
\end{remark}

\begin{ep}
Let $M=\mathbb R^2$ with coordinates $q,p$, $\omega=dp\wedge dq$ and $H=p$. Taking $f=q$, the Hamiltonian vector field is given by $v_H=\frac{\partial}{\partial q}$ and thus $\cL_{v_H}f=1$ i.e. $f$ is locally but not globally conserved. Then for any integral curve $\gamma(t)=(q_0+(t-t_0),p_0)$ of $v_H$ we have $f(\gamma(t))=f(\gamma(t_0))+(t-t_0)$, i.e., $f$ is not a constant of motion.
\end{ep}

\subsection{The algebraic structure of conserved quantities}\label{subsec:alg}

Throughout this subsection $M$ will denote a manifold and $v$ a vector field on $M$. We present here elementary methods to construct new conserved quantities from known ones. 

\begin{lemma}
The space $\cC_{str}(v)$ is a graded subalgebra of $\Omega^\bullet (M)$.
\end{lemma}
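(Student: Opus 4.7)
The plan is to verify the two defining properties of a graded subalgebra of $\Omega^\bullet(M)$: first, that $\cC_{str}(v)$ is a graded vector subspace, and second, that it is closed under the wedge product.

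For the graded subspace property, the key observation is that the Lie derivative $\cL_v\colon \Omega^\bullet(M)\to \Omega^\bullet(M)$ is a degree-zero linear operator, i.e. it sends $\Omega^k(M)$ to $\Omega^k(M)$. Hence its kernel decomposes as $\cC_{str}(v)=\bigoplus_k \bigl(\cC_{str}(v)\cap \Omega^k(M)\bigr)$, which shows that $\cC_{str}(v)$ is a graded subspace. Linearity (closure under sums and scalar multiples) is immediate from the linearity of $\cL_v$.

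For the closure under wedge product, the essential input is the derivation property of the Lie derivative, namely $\cL_v(\alpha\wedge\beta)=\cL_v\alpha\wedge\beta+\alpha\wedge \cL_v\beta$ for any $\alpha,\beta\in \Omega^\bullet(M)$. If both $\alpha$ and $\beta$ lie in $\cC_{str}(v)$, so that $\cL_v\alpha=0$ and $\cL_v\beta=0$, then both terms on the right-hand side vanish and therefore $\cL_v(\alpha\wedge\beta)=0$, showing that $\alpha\wedge\beta\in \cC_{str}(v)$. Note also that the constant function $1\in \Omega^0(M)$ satisfies $\cL_v 1=0$, so $\cC_{str}(v)$ contains the unit of $\Omega^\bullet(M)$.

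There is no real obstacle here; the argument is a direct application of the Leibniz rule for $\cL_v$, and the only thing to be careful about is to record the degree-preserving nature of $\cL_v$ so as to justify the word ``graded'' in the statement. Analogous subalgebra statements do not hold for $\cC(v)$ or $\cC_{loc}(v)$ without further modification, since exactness (respectively closedness) of $\cL_v\alpha$ and $\cL_v\beta$ does not imply the same for $\cL_v\alpha\wedge\beta+\alpha\wedge\cL_v\beta$ in general.
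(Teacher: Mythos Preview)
Your proof is correct and is precisely the standard argument via the Leibniz rule for $\cL_v$. The paper in fact states this lemma without proof, treating it as evident, so there is nothing further to compare.
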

As the following example illustrates the spaces $\cC(v)$ and $\cC_{loc}(v)$, unlike $\cC_{str}(v)$, are not closed under wedge-multiplication.
\begin{ep}
Let $M=\mathbb R^3$, $\omega=dx\wedge dy \wedge dz$ and $H=-xdy$. We observe that $dH=-dx\wedge dy$ and consequently $v_H=\frac{\partial}{\partial z}$. We set $\alpha=zdx$ and $\beta=zdy$. Then $\cL_{v_H}\alpha=dx$ and $\cL_{v_H}\beta=dy$ are exact but $\cL_{v_H}(\alpha\wedge \beta)=2zdx\wedge dy$ is not even closed.
\end{ep}

However stability under multiplication with elements from the following graded-commutative subalgebra of $\Omega^\bullet(M)$ is assured:

$$\cA(v):=\{\beta\in \Omega(M)~|~d\beta=0 \text{ and }\cL_{v}\beta=0\}\subset \cC_{str}(v).$$

\begin{lemma}\label{lem:module}
The spaces $\cC(v)$ and $\cC_{loc}(v)$ are graded modules over $\cA(v)$.
\end{lemma}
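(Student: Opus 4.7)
The plan is to use the Leibniz rule for both $d$ and the Lie derivative $\cL_v$ together with the defining properties of $\cA(v)$. The claim has two ingredients: first, that $\cA(v)$ is indeed a graded-commutative subalgebra of $\Omega^\bullet(M)$ (a fact implicit in the text preceding the lemma), and second, that wedging with an element of $\cA(v)$ preserves both $\cC(v)$ and $\cC_{loc}(v)$. I would treat the subalgebra claim first, as a one-liner: for $\beta_1,\beta_2 \in \cA(v)$, both $d$ and $\cL_v$ are graded derivations of $\Omega^\bullet(M)$, so $d(\beta_1\wedge\beta_2)=0$ and $\cL_v(\beta_1\wedge\beta_2)=0$, placing $\beta_1\wedge\beta_2$ in $\cA(v)$.

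The main step is the module property. Fix $\beta\in\cA(v)$ and $\alpha\in\cC_{loc}(v)$, so $\cL_v\alpha$ is closed. By the derivation property of $\cL_v$ on wedge products,
\[
\cL_v(\beta\wedge\alpha)=(\cL_v\beta)\wedge\alpha+\beta\wedge \cL_v\alpha=\beta\wedge\cL_v\alpha,
\]
using $\cL_v\beta=0$. Applying $d$ and using the graded Leibniz rule together with $d\beta=0$ and $d(\cL_v\alpha)=0$ gives $d(\beta\wedge\cL_v\alpha)=0$, so $\beta\wedge\alpha\in\cC_{loc}(v)$. For $\alpha\in\cC(v)$, write $\cL_v\alpha=d\gamma$; then the same computation yields $\cL_v(\beta\wedge\alpha)=\beta\wedge d\gamma$, and since $d\beta=0$ we obtain
\[
d\bigl(\beta\wedge\gamma\bigr)=d\beta\wedge\gamma+(-1)^{|\beta|}\beta\wedge d\gamma=(-1)^{|\beta|}\beta\wedge d\gamma,
\]
which shows that $\cL_v(\beta\wedge\alpha)$ is exact, so $\beta\wedge\alpha\in\cC(v)$.

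Finally I would note that the associativity and unitality required for a graded module structure are inherited from $\Omega^\bullet(M)$, since the constant function $1$ obviously lies in $\cA(v)$ and the $\wedge$-product is associative. There is no genuine obstacle here: the only thing to watch is the sign bookkeeping from the graded Leibniz rule, which affects the explicit primitive but not the statement that the result is exact or closed.
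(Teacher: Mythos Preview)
Your proof is correct and follows essentially the same approach as the paper: both use the derivation property of $\cL_v$ together with $\cL_v\beta=0$ and $d\beta=0$ to reduce $\cL_v(\alpha\wedge\beta)$ to $d\gamma\wedge\beta=d(\gamma\wedge\beta)$ (the paper places $\beta$ on the right, avoiding your harmless sign). Your version is simply more detailed, treating the $\cC_{loc}$ case and the module axioms explicitly where the paper leaves these implicit.
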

\begin{proof}
We prove the statement for $\cC(v)$, the proof for $\cC_{loc}(v)$ being identical. Let $\alpha\in \cC(v)$ (that is, there is a form $\gamma$ with $\cL_{v}\alpha=d\gamma$) and $\beta\in \cA(v)$. Then
$$\cL_{v}(\alpha\wedge \beta)=\cL_{v}\alpha\wedge \beta+
\alpha\wedge \cL_{v}\beta=d\gamma\wedge \beta=d(\gamma\wedge \beta).$$
\end{proof}

Again, more can be said if $(M,\omega)$ is pre-$n$-plectic and $v$ preserves $\omega$.

\begin{prop}\label{conserved-subalgebra} Let $(M,\omega)$ be pre-$n$-plectic and $v$ a vector field satisfying $\mathcal L_v\omega=0$.
The graded vector spaces $$L_\infty(M,\omega)\cap \cC_{loc}(v),\;\;L_\infty(M,\omega)\cap \cC(v)\text{ and }L_\infty(M,\omega)\cap \cC_{str}(v)$$ are $L_\infty$-subalgebras of $L_\infty(M,\omega)$. Moreover $\mathcal L_{v} \left(l_k(\beta_1,...,\beta_k)\right)=0$ for $k\geq 1$ and $\beta_1,...,\beta_k\in L_\infty(M,\omega)\cap \cC_{loc}(v)$.
\end{prop}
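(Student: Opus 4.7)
My plan is to prove the ``moreover'' statement first and then derive the subalgebra assertions as an easy consequence, since strict conservation of the output is stronger than local or global conservation.

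\textbf{Step 1 (the case $k=1$).} If $\deg\beta=0$ then $l_1(\beta)=0$ and there is nothing to check. If $\deg\beta>0$ then $l_1(\beta)=d\beta$, and since $\mathcal{L}_v$ commutes with $d$ we have $\mathcal{L}_v\bigl(l_1(\beta)\bigr)=d\mathcal{L}_v\beta$. By hypothesis $\beta\in\cC_{loc}(v)$, i.e.\ $\mathcal{L}_v\beta$ is closed, so $d\mathcal{L}_v\beta=0$.

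\textbf{Step 2 (the case $k\geq 2$).} By the very definition of $l_k$, the bracket vanishes unless all $\beta_i$ lie in $L_0=\ham{n-1}$, so we only need to treat that case. Each $\beta_i$ is locally conserved and Hamiltonian, so by Lemma~\ref{conserved-properties2}(i) we have $\iota_{[v_{\beta_i},v]}\omega=0$ for every $i$. I want to compute
\[
\mathcal{L}_v\bigl(l_k(\beta_1,\dots,\beta_k)\bigr)=\varsigma(k)\,\mathcal{L}_v\bigl(\iota_{v_{\beta_k}}\cdots\iota_{v_{\beta_1}}\omega\bigr)
\]
using the standard identity $[\mathcal{L}_v,\iota_X]=\iota_{[v,X]}$. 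Commuting $\mathcal{L}_v$ past the $k$ contractions yields a sum of $k+1$ terms: one term $\iota_{v_{\beta_k}}\cdots\iota_{v_{\beta_1}}\mathcal{L}_v\omega$, which vanishes because $\mathcal{L}_v\omega=0$; and for each $i\in\{1,\dots,k\}$ a term in which the $i$-th factor $\iota_{v_{\beta_i}}$ is replaced by $\iota_{[v,v_{\beta_i}]}$. Using that contractions by vector fields anticommute, in each such term we may move the factor $\iota_{[v,v_{\beta_i}]}$ to the innermost position (so that it acts directly on $\omega$), at the cost of a sign. The resulting innermost contraction is $\iota_{[v,v_{\beta_i}]}\omega=-\iota_{[v_{\beta_i},v]}\omega=0$, so every summand vanishes. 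This proves $\mathcal{L}_v\bigl(l_k(\beta_1,\dots,\beta_k)\bigr)=0$.

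\textbf{Step 3 (subalgebra statements).} The previous two steps show that if all $\beta_i$ lie in $L_\infty(M,\omega)\cap\cC_{loc}(v)$, then $l_k(\beta_1,\dots,\beta_k)\in\cC_{str}(v)$. Since $l_k$ already takes values in $L_\infty(M,\omega)$ by construction, and since $\cC_{str}(v)\subset\cC(v)\subset\cC_{loc}(v)$ by Lemma~\ref{conserved-properties}(i), closure under $l_k$ holds for all three subspaces simultaneously. Hence each of them is an $L_\infty$-subalgebra of $L_\infty(M,\omega)$.

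The main obstacle is Step 2, specifically keeping track of signs when moving the commutator contraction $\iota_{[v,v_{\beta_i}]}$ past the other $\iota_{v_{\beta_j}}$'s; but since we only need to conclude that each summand is zero (not to compute its precise value), the signs are in fact irrelevant, which is what makes the argument go through cleanly.
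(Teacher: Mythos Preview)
Your proof is correct and follows essentially the same approach as the paper: first show that $l_k$ applied to locally conserved inputs yields a strictly conserved output (handling $k=1$ via $d\cL_v\beta=0$ and $k\ge 2$ by commuting $\cL_v$ past the contractions using $[\cL_v,\iota_X]=\iota_{[v,X]}$ together with $\iota_{[v,v_{\beta_i}]}\omega=0$ from Lemma~\ref{conserved-properties2}(i)), then deduce the three subalgebra statements from the inclusions $\cC_{str}(v)\subset\cC(v)\subset\cC_{loc}(v)$. Your Step~2 is slightly more explicit than the paper's in spelling out that the commutator contraction must be anticommuted inward to hit $\omega$ directly, but this is only a presentational difference.
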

\begin{proof}
We claim that brackets of locally conserved quantities in $L_\infty(M,\omega)$ are strictly conserved. The only bracket which is nontrivial on components other than $\Omega^{n-1}_{Ham}(M)$ is $l_1=d$. It follows from part (iii) of Lemma \ref{conserved-properties} that $l_1=d$ applied to a locally conserved quantity is strictly conserved. Now for $k\ge 2$ consider $\beta_1,...,\beta_k\in \Omega^{n-1}_{Ham}(M)$, such that $\mathcal L_{v}\beta_i$ is closed for all $i$. We want to show that
\[
\mathcal L_{v} \left(l_k(\beta_1,...,\beta_k)\right)=0.
\]
As $ l_k(\beta_1,...,\beta_k)=\pm\iota(v_{\beta_1}\wedge\dots\wedge v_{\beta_k})\omega$, for any collection of Hamiltonian vector fields $\{v_{\beta_i}\}$ for $\{\beta_i\}$ this is equivalent to showing
\[
\mathcal L_{v} \iota_{v_{\beta_k}}...\iota_{v_{\beta_1}}\omega=0.
\] 
Using the identity $\mathcal L_X\circ \iota_Y=\iota_Y \circ\mathcal L_X+\iota_{[X,Y]}$ we can move $\mathcal L_{v}$ past the $\iota_{v_{\beta_i}}$ since $\iota_{[v,v_{\beta_i}]}\omega=0$ by part (i) of Lemma \ref{conserved-properties2}. We find 
\[
\mathcal L_{v}\iota_{v_{\beta_k}}...\iota_{v_{\beta_1}}\omega=\iota_{v_{\beta_k}}...\iota_{v_{\beta_1}}\mathcal L_{v}\omega=0,
\] 
proving our claim.
\end{proof}

\section{Conserved quantities from homotopy co-momentum maps}\label{sec:conshomo}
In this section we consider (infinitesimal) actions. 
More precisely, $(M,\omega)$ will always denote a pre-$n$-plectic manifold and G a Lie group (resp. $\mathfrak g$ a Lie algebra) acting on $M$.

Given a Hamiltonian form $H$ on $M$, we define three notions of ``preservedness'' of $H$ with respect to the action, see Definition \ref{def:pres}. Suitable conserved quantities are constructed from co-momentum maps for each of these three notions of preservedness,
respectively in \S \ref{subsec:locpres},
\S \ref{subsec:glopres} and
\S \ref{sec:strict}. Finally, in
\S \ref{subsec:homol} we propose a less intuitive homological approach that has the advantage of being rather concise.

\subsection{Actions on multisymplectic manifolds}\label{subsec:action}

\begin{defi}\label{def:part} Let $(M,\omega)$ be a pre-$n$-plectic manifold. A right action $\vartheta$ of a Lie group $G$ on $M$ is called \emph{multisymplectic} if $\vartheta^*_g\omega=\omega$ for all $g\in G$, where $\vartheta_g=\vartheta(\cdot,g)$. An infinitesimal right action of a Lie algebra $\mathfrak g$ on $M$, i.e. a Lie algebra homomorphism $\mathfrak g\to \mathfrak X(M), x\mapsto v_x$, is called \emph{multisymplectic} if $\cL_{v_x}\omega=0$ for all $x\in\mathfrak g$. For a connected Lie group $G$, a right action $\vartheta$ is multisymplectic if{f} the corresponding infinitesimal right action (given by $x\mapsto v_x$ where $v_x(m)=\frac{d}{dt}|_0 \vartheta(m,exp(tx))$ at all $m\in M$) is multisymplectic.
\end{defi}

A multisymplectic infinitesimal action is thus a Lie algebra homomorphism from $\mathfrak g$ to $\mathfrak X(M,\omega)=\{X\in\mathfrak X(M)| \cL_X\omega=0\}$. One may ask, whether such an action admits an ``$\li$-lift'' to $\li(M,\omega)$. For an explicit description of the equations fulfilled by such a lift, the following definition is useful.

\begin{defi}\label{def:differential} Let $\mathfrak g$ be a Lie algebra. We define the \emph{Lie algebra homology differential} 
$\partial$ by setting 
$$\partial_k=\partial|_{\Lambda^k\mathfrak g} \colon \Lambda^k\g \to \Lambda^{k-1}\g, x_1\wedge\dots\wedge x_k
\mapsto \sum_{1 \leq i < j \leq
 k} (-1)^{i+j} [x_{i},x_{j}] \wedge x_{1} \wedge \cdots
 \wedge \hat{x}_{i} \wedge \cdots \wedge \hat{x}_{j} \wedge \cdots \wedge x_{k},$$ 
for $k\geq 1$. We put $\Lambda^{-1}\mathfrak g=\{0\}$ and $\partial_0$ to be the zero map.
\end{defi}

\begin{defi}An \emph{$\li$-morphism} $(f)$ from $\mathfrak g$ to $\li(M,\omega)$ is a collection of maps $(f)=\{f_i:\Lambda^i\mathfrak g\to \Omega^{n-i}(M)| 1\leq i\leq n\}$ satisfying $\ima(f_1)\subset \ham{n-1}$ and the equation
 \begin{equation}\label{main}
 -f_{k-1}(\partial(p))=df_k(p)+ \vs(k)\iota(v_p)\omega
\end{equation}
 for all $k=1,\ldots,n+1$ and $p\in \Lambda^k\g$
 (setting $f_0$ and $f_{n+1}$ to be zero). Here we use the short hand notation $v_p:=v_{x_1}\wedge\dots\wedge v_{x_k}$ whenever $p= {x_1}\wedge\dots\wedge{x_k}$ for $x_i\in\g$.
\end{defi} 
\begin{remark}
This is, of course, the general definition of an $L_\infty$-algebra morphism specialized to the case at hand.
\end{remark}

We recall from \cite[\S 5]{FRZ} the higher analogue of momentum map in symplectic geometry:
 \begin{defi}
A \emph{(homotopy) co-momentum map} for {a multisymplectic infinitesimal action} $v:\mathfrak g\to \mathfrak X(M)$ on $(M,\omega)$ is an $\li$-morphism 
$(f)\colon \g \to L_{\infty}(M,\omega)$ such 
that for all $x\in \g$
\begin{equation}\label{eq:mom}
d(f_1(x))=-\iota_{v_x}\omega.
\end{equation}
\end{defi}

\begin{remark} $ $
\begin{compactenum}[(1)]
\item In \cite{FLRZ} and \cite{LeonidTillmannComoment} equations \eqref{main} and \eqref{eq:mom} are interpreted as a coboundary condition on a certain chain complex.
\item 
A co-momentum map $(f)$ is \emph{$G$-equivariant} if the components $f_i\colon \Lambda^i\mathfrak g\to \Omega^{n-i}(M)$ are equivariant for all $i\in\{1,...,n\}$. When $G$ is connected, this can be expressed infinitesimally:
$\forall q\in \Lambda^{i}\mathfrak g$ and for all $x\in \mathfrak g=T_eG$ the equality $\cL_{v_x}(f_i(q))=f_i([x,q])$ holds. Here $[x,\cdot]$ is $ad(x)$ acting  on $\Lambda^\bullet \mathfrak g$.
\end{compactenum}
\end{remark}

Now we turn to infinitesimal actions preserving a Hamiltonian $n{-}1$-form $H$ on a pre-$n$-plectic manifold $(M,\omega)$. As in the case of the conserved quantities, one has to distinguish to which extent the action preserves the Hamiltonian form.

\begin{framed}
\begin{defi}\label{def:pres} Let $\mathfrak g\to \mathfrak X(M,\omega), x\mapsto v_x$ be an infinitesimal action. It is called
\begin{compactenum}[(a)]
	\item \emph{locally $H$-preserving} if $\cL_{v_x}H$ is closed for all $x\in \mathfrak g$.
	\item \emph{globally $H$-preserving} if $\cL_{v_x}H$ is exact for all $x\in \mathfrak g$.
	\item \emph{strictly $H$-preserving} if $\cL_{v_x}H=0$ for all $x\in \mathfrak g$.
\end{compactenum}
\end{defi}
\end{framed}
\begin{remark}
Usually a differential form would be called ``preserved by an infinitesimal action'' if condition (c) is fulfilled.
\end{remark}

In the following we will investigate the conserved quantities arising from co-momentum maps separately for these three cases.

\subsection{Conserved quantities from locally $H$-preserving actions}\label{subsec:locpres}

In this subsection we assume that $(M,\omega)$ is a pre-$n$-plectic manifold, $H\in\ham{n-1}$ and that $(f):\mathfrak g\to \li(M,\omega)$ is the co-momentum of a \emph{locally} $H$-preserving infinitesimal action $\mathfrak g\to \mathfrak X(M,\omega),x\mapsto v_x$.\newline

By the definition of a co-momentum map, the generator of the infinitesimal action associated to $x$ in $\g$ is a Hamiltonian vector field of $f_1(x)$. As earlier,  for $p=x_1\wedge ...\wedge x_k\in\Lambda^k\mathfrak g$ we write $v_p:=v_{x_1}\wedge ...\wedge v_{x_k}$ and $\iota({v_p})=\iota_{v_{x_k}}...\iota_{v_{x_1}}$.

\begin{lemma}\label{lemma:one} Let $(f)=\{ f_i | 1\leq i\leq n \} $ be a co-momentum for $ v:\mathfrak g\to \mathfrak X (M,\omega)$ and $H\in \ham{n-1}$. Then for any Hamiltonian vector field $v_H$ of $H$ we have 
\begin{compactenum} [(i)]
	\item $f_1(x)\in \cC_{loc}(v_H)$ for all $x\in\mathfrak g$,
	\item $\iota_{[v_H,v_x]}\omega=0$ for all $x\in\mathfrak g$,
	\item $\iota(v_p)\omega\in \cC_{str}(v_H)$ for all $p\in \Lambda^k\mathfrak g$.
\end{compactenum} \label{Lemma:loc-pres} 
\end{lemma}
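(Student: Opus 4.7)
The plan is to observe that (i) and (ii) are essentially direct corollaries of Lemma \ref{conserved-properties2}, while (iii) requires a short calculation using the iterated commutation of $\cL_{v_H}$ with interior products.

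First I would note that by equation \eqref{eq:mom} defining the co-momentum map, the vector field $v_x$ is a Hamiltonian vector field for $f_1(x) \in \ham{n-1}$. Since the infinitesimal action $\g \to \mathfrak{X}(M,\omega)$ is locally $H$-preserving, $\cL_{v_x}H$ is closed. Also, $v_H$ being a Hamiltonian vector field for $H$ satisfies $\cL_{v_H}\omega = 0$ by Cartan's formula, so Lemma \ref{conserved-properties2} applies with $v = v_H$. Part (ii) of that lemma, applied to $\alpha = f_1(x)$ with Hamiltonian vector field $v_x$, immediately yields $f_1(x) \in \cC_{loc}(v_H)$, proving (i). Then part (i) of the same lemma, applied in the same way, shows that $\iota_{[v_x, v_H]}\omega = 0$, which is equivalent to assertion (ii) up to a sign.

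For (iii), by linearity it suffices to treat a decomposable element $p = x_1 \wedge \cdots \wedge x_k$. I would repeatedly apply the identity $\cL_X \circ \iota_Y = \iota_Y \circ \cL_X + \iota_{[X,Y]}$ to rewrite
\[
\cL_{v_H}\bigl(\iota(v_p)\omega\bigr) = \cL_{v_H}\,\iota_{v_{x_k}} \cdots \iota_{v_{x_1}}\omega = \iota_{v_{x_k}} \cdots \iota_{v_{x_1}}\cL_{v_H}\omega + \sum_{i=1}^{k} \iota_{v_{x_k}} \cdots \iota_{v_{x_{i+1}}}\,\iota_{[v_H, v_{x_i}]}\,\iota_{v_{x_{i-1}}} \cdots \iota_{v_{x_1}}\omega.
\]
The first summand vanishes because $\cL_{v_H}\omega = 0$. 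For each term in the sum, I would use the graded skew-commutativity of interior products to move $\iota_{[v_H, v_{x_i}]}$ past the $\iota_{v_{x_j}}$ with $j < i$, bringing it to the rightmost position so that it acts directly on $\omega$; by part (ii), $\iota_{[v_H, v_{x_i}]}\omega = 0$, and the term vanishes.

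Since the argument is almost purely formal once Lemma \ref{conserved-properties2} is in hand, I do not anticipate any genuine obstacle. The only care needed is in the bookkeeping of the iterated commutator computation in (iii), but the skew-commutativity of contractions makes this clean. No sign conventions enter in an essential way, since the conclusion only concerns the vanishing of a form.
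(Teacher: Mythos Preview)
Your proof is correct and follows essentially the same route as the paper: (i) and (ii) are deduced from Lemma~\ref{conserved-properties2} (ii) and (i) respectively, and (iii) uses the identity $[\cL_X,\iota_Y]=\iota_{[X,Y]}$ together with part (ii) to commute $\cL_{v_H}$ past all the contractions. Your treatment of (iii) is slightly more explicit---you write out the full sum of commutator terms and then kill each one by moving $\iota_{[v_H,v_{x_i}]}$ rightward---whereas the paper simply indicates the step-by-step commutation; but the underlying argument is identical.
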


\begin{proof}
(i) follows from Lemma \ref{conserved-properties2} (ii) and (ii) from Lemma \ref{conserved-properties2} (i). Further, (iii) follows upon recalling that $[\cL_v,\iota_w]=\iota_{[v,w]}$ and part (ii):
$$
\cL_{v_H}(\iota(v_p)\omega)=\cL_{v_H}\iota_{v_{x_k}}...\iota_{v_{x_1}}\omega=-\iota_{v_{x_k}}\cL_{v_H}...\iota_{v_{x_1}}\omega=...=\pm\iota(v_p)(\cL_{v_H}\omega)=0.
$$
\end{proof}

It turns out, that certain subspaces of the image of the higher components of the co-momentum map constitute locally conserved quantities. To specify this we recall the definition of Lie algebra homology.

\begin{defi}\label{def:cob}
Let $\mathfrak g$ be a Lie algebra, $k\geq 1$ and $\partial_k$ the $k$-th Lie algebra homology differential.
We define
\begin{compactenum}[(a)]
	\item the \emph{cycles} $Z_k(\mathfrak g)=\ker(\partial_k)\subset \Lambda^k\mathfrak g$,
	\item the \emph{boundaries} $B_k(\mathfrak g)=\ima(\partial_{k+1})\subset \Lambda^k\mathfrak g$ and
	\item the $k$-th \emph{Lie algebra homology space} $H_k(\mathfrak g)=\frac{Z_k(\mathfrak g)}{B_k(\mathfrak g)}$.
\end{compactenum}
\end{defi}
\begin{remark}The space $Z_k(\mathfrak g)$ is denoted by $\cP_{\g,k}$ and called the \emph{k-th Lie kernel} of $\g$ in \cite{MadsenSwannClosed}.
\end{remark}

\begin{prop}\label{prop:loc-conserved}
Let $p\in Z_k(\mathfrak g)$. Then $f_k(p)$ is locally conserved by any Hamiltonian vector field $v_H$ of $H$.
\end{prop}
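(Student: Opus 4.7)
My plan is to exploit the $L_\infty$-morphism equation \eqref{main} defining the co-momentum map, which for any $p\in \Lambda^k\g$ reads
$$-f_{k-1}(\partial p)=df_k(p)+\vs(k)\iota(v_p)\omega.$$
Since $p\in Z_k(\g)$, by definition $\partial p=0$, so the left hand side vanishes and the equation simplifies to
$$df_k(p)=-\vs(k)\iota(v_p)\omega.$$
This is the key observation: on cycles, the exterior derivative of $f_k(p)$ is expressible purely in terms of the contraction of $\omega$ with the generators.

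Next, I will apply $\cL_{v_H}$ to both sides. Since $\cL_{v_H}$ commutes with $d$ on any form, we have
$$d\bigl(\cL_{v_H} f_k(p)\bigr)=\cL_{v_H}(df_k(p))=-\vs(k)\,\cL_{v_H}\iota(v_p)\omega.$$
By part (iii) of Lemma \ref{lemma:one}, the form $\iota(v_p)\omega$ is strictly conserved by $v_H$ (this uses that $v_H$ preserves $\omega$, which holds by the remark after Definition \ref{Hamiltonian}, and that $[v_H,v_x]$ is $\omega$-null by part (ii) of the same lemma, which in turn relied on $H$ being locally $\g$-preserved). Hence $\cL_{v_H}\iota(v_p)\omega=0$, and therefore $d\cL_{v_H} f_k(p)=0$. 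This is exactly the condition that $f_k(p)\in \cC_{loc}(v_H)$.

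There is no real obstacle here; the work has already been done in Lemma \ref{lemma:one}. The entire argument amounts to noticing that the Chevalley--Eilenberg type equation for the co-momentum map collapses on cycles to $df_k(p)=-\vs(k)\iota(v_p)\omega$, and then transporting the already-established strict conservation of $\iota(v_p)\omega$ through the exterior derivative. I would also briefly note the consistency check for $k=1$: since $\partial_1=0$ we have $Z_1(\g)=\g$, and the proposition reduces to the statement in Lemma \ref{lemma:one}(i), which is reassuring but does not require separate treatment.
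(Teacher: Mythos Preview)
Your proof is correct and follows essentially the same route as the paper: use $\partial p=0$ in Equation \eqref{main} to reduce $df_k(p)$ to $-\vs(k)\iota(v_p)\omega$, commute $d$ with $\cL_{v_H}$, and invoke Lemma \ref{Lemma:loc-pres}(iii). The paper separates the case $k=1$ and cites Lemma \ref{Lemma:loc-pres}(i) directly, whereas you treat all $k$ uniformly and mention $k=1$ only as a consistency check, but this is a cosmetic difference.
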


\begin{proof}
The case $k=1$ is part (i) of Lemma \ref{Lemma:loc-pres}. Assume now $k>1$. We have to show that $\mathcal L_{v_H}f_k(p)$ is closed. We have
\begin{equation*}\label{eq:f1p}
d\mathcal L_{v_H}f_k(p)=\mathcal L_{v_H}df_k(p)=-\vs(k) \mathcal L_{v_H} \iota(v_p)\omega=0,
\end{equation*}
where the first equality holds because the Lie derivative commutes with the exterior derivative, the second one, because of Equation (\ref{main}), and the last one because of Lemma \ref{Lemma:loc-pres} (iii).
\end{proof}

Proposition \ref{prop:loc-conserved} states that $\cL_{v_H}f_k(p)$ is a closed $(n{-}k)$-form, hence we obtain:

\begin{cor}
Let $p\in Z_k(\mathfrak g)$.  If $H^{n-k}_{dR}(M)$ is zero, then $f_k(p)$ is globally conserved  by any Hamiltonian vector field $v_H$ of $H$.
\end{cor}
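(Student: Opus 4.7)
The plan is to derive this essentially for free from the preceding Proposition~\ref{prop:loc-conserved} together with the de Rham vanishing hypothesis; no new calculation is required.

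First, I would record the degree information. By the definition of a co-momentum map we have $f_k\colon \Lambda^k\g\to \Omega^{n-k}(M)$, so that $f_k(p)$, and therefore $\cL_{v_H}f_k(p)$, is an $(n-k)$-form on $M$.

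Next, I would invoke Proposition~\ref{prop:loc-conserved} applied to the cycle $p\in Z_k(\g)$: since the ambient infinitesimal action is locally $H$-preserving by standing assumption in \S\ref{subsec:locpres}, that proposition yields that $f_k(p)$ is a locally conserved quantity for $v_H$, i.e.\ $\cL_{v_H}f_k(p)$ is a \emph{closed} $(n-k)$-form.

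Finally, the hypothesis $H^{n-k}_{dR}(M)=0$ promotes closedness to exactness: every closed $(n-k)$-form on $M$ is exact, hence $\cL_{v_H}f_k(p)$ is exact. By Definition~\ref{def:consq}(b) this is exactly the statement that $f_k(p)\in \cC(v_H)$, i.e.\ $f_k(p)$ is globally conserved by $v_H$. The whole argument is a one-line deduction, so there is no real obstacle; the only thing to be careful about is matching the form-degree $n-k$ with the cohomological hypothesis, which is why the statement asks for the vanishing of $H^{n-k}_{dR}$ and not of some other degree.
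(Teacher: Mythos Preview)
Your argument is correct and matches the paper's own reasoning exactly: the corollary is stated immediately after Proposition~\ref{prop:loc-conserved} with the remark that $\cL_{v_H}f_k(p)$ is a closed $(n{-}k)$-form, so the vanishing of $H^{n-k}_{dR}(M)$ forces exactness. There is nothing to add.
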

\begin{prop}\label{prop:ex-loc-conserved}
If $p\in B_k(\mathfrak g)\subset Z_k(\mathfrak g)$, then $f_k(p)$ is globally conserved  by any Hamiltonian vector field $v_H$ of $H$.
\end{prop}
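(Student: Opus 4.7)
The plan is to exploit the $L_\infty$-morphism equation \eqref{main} at level $k+1$, applied to any $q \in \Lambda^{k+1}\mathfrak g$ with $\partial q = p$ (which exists by assumption $p \in B_k(\mathfrak g)$). This equation reads
\[
-f_k(\partial q) = df_{k+1}(q) + \varsigma(k+1)\iota(v_q)\omega,
\]
which, since $\partial q = p$, rearranges to express $f_k(p)$ itself as $-df_{k+1}(q) - \varsigma(k+1)\iota(v_q)\omega$. The strategy is then to apply $\mathcal L_{v_H}$ to this identity and to show that each of the two resulting terms contributes either an exact form or zero.

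First, I would note that $\mathcal L_{v_H}$ commutes with $d$, so $\mathcal L_{v_H}\bigl(df_{k+1}(q)\bigr) = d\bigl(\mathcal L_{v_H} f_{k+1}(q)\bigr)$ is manifestly exact. Next, for the term $\iota(v_q)\omega$, I invoke Lemma \ref{Lemma:loc-pres}(iii) (applied to $q\in \Lambda^{k+1}\mathfrak g$ rather than to $p$), which asserts that $\iota(v_q)\omega$ is strictly conserved by $v_H$, i.e. $\mathcal L_{v_H}\iota(v_q)\omega = 0$. Note that this application only requires the infinitesimal action to be locally $H$-preserving, which is the standing assumption of \S \ref{subsec:locpres}, so no additional hypothesis is needed.

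Combining these two observations, one obtains
\[
\mathcal L_{v_H} f_k(p) = -d\bigl(\mathcal L_{v_H} f_{k+1}(q)\bigr),
\]
which exhibits $\mathcal L_{v_H} f_k(p)$ as an exact form and hence shows that $f_k(p)$ is globally conserved by $v_H$. There is no real obstacle here: the entire argument is a direct consequence of the $L_\infty$-morphism equation together with the strict conservedness already established in Lemma \ref{Lemma:loc-pres}(iii). The only subtlety worth flagging is that one needs $f_{k+1}$ to be defined and nonzero in general, which is fine for $k+1 \leq n$; in the degenerate case $k = n$ one has $f_{n+1} = 0$ by convention, but then $p \in B_n(\mathfrak g)$ would force $f_n(p) = -\varsigma(n+1)\iota(v_q)\omega$ directly, which is already strictly conserved, so the conclusion holds a fortiori.
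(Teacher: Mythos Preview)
Your proof is correct and follows essentially the same route as the paper: choose $q$ with $\partial q = p$, rewrite $f_k(p)$ via Equation~\eqref{main}, apply $\cL_{v_H}$, and invoke Lemma~\ref{Lemma:loc-pres}(iii) to kill the $\iota(v_q)\omega$ term. Your explicit treatment of the boundary case $k=n$ is a welcome clarification that the paper leaves implicit.
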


\begin{proof}
Let $q$ be a potential for $p$, i.e. $\partial_{k+1}q=p$. Then
\begin{align*}
\cL_{v_H}(f_k(p))=\cL_{v_H}(f_k(\partial q))&=\cL_{v_H}(-df_{k+1}(q) -\vs(k+1)\iota(v_q)\omega)\\&=-d\cL_{v_H}f_{k+1}(q) -\vs(k+1)\cL_{v_H}\iota(v_q)\omega),
\end{align*}
using Equation (\ref{main}). The statement then follows by Lemma \ref{Lemma:loc-pres} (iii).
\end{proof}

The following example shows sharpness of the statement of Proposition \ref{prop:loc-conserved}, i.e. for $p\in \Lambda^k\mathfrak g$ the condition $\partial p=0$, in general, does not imply that $f_k(p)$ is globally conserved.

\begin{ep}\label{local-example}
Let $M=\mathbb R^3$, $\omega=dx\wedge dy \wedge dz$ and $H=-xdy$. We already observed $dH=-dx\wedge dy$ and $v_H=\frac{\partial}{\partial z}$. We consider the two-dimensional abelian Lie algebra $\mathfrak g=\langle a,b\rangle_\RR$ and the homomorphism $v:\mathfrak g\to \mathfrak X(M)$ given by $v_a=\frac{\partial}{\partial x}$ and $v_b=\frac{\partial}{\partial y}$. We have that $\cL_{v_a}H=-dy$ is exact and $\cL_{v_b}H=0$.
 We construct a co-momentum map for this action by $f_1(a)=-ydz$, $f_1(b)=xdz$ and $f_2(a\wedge b)=-z$. 
Then $a\wedge b\in Z_2(\mathfrak g)$ is a cycle and $-z$ is locally conserved, as predicted by Proposition \ref{prop:loc-conserved}, but not globally:
$$\mathcal L_{v_H}(-z)=\iota_{\frac{\partial}{\partial z}}d(-z)=-1\neq 0 .$$
\end{ep}

\subsection{Conserved quantities from globally $H$-preserving actions}\label{subsec:glopres}

In this subsection we assume that $(M,\omega)$ is a pre-$n$-plectic manifold, $H\in\ham{n-1}$ and that $(f):\mathfrak g\to \li(M,\omega)$ is the co-momentum of a \emph{globally} $H$-preserving infinitesimal action 
$\mathfrak g\to \mathfrak X(M,\omega),x\mapsto v_x$.\\

As Example \ref{local-example} indicates, no significant improvements of the above results are to be expected upon passing from locally to globally $H$-preserving actions. There is only a slight improvement of Lemma \ref{Lemma:loc-pres} (i) with essentially the same proof:

\begin{lemma}Let $(f)=\{f_i|1\leq i\leq n\}$ be a co-momentum for $v:\mathfrak g\to \mathfrak X(M)$. Then $f_1(x)\in \cC(v_H)$ for all $x\in\mathfrak g$ and for any Hamiltonian vector field $v_H$ of $H$. \label{lem:easy}
\end{lemma}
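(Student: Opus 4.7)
The proof should be essentially the globally-conserved analogue of Lemma~\ref{Lemma:loc-pres}(i). The plan is to reduce the claim directly to Lemma~\ref{conserved-properties2}(iii).

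First, I would note that by the defining equation \eqref{eq:mom} of a co-momentum map, the generator $v_x$ of the infinitesimal action is a Hamiltonian vector field for the $(n{-}1)$-form $f_1(x) \in \ham{n-1}$. Since $(M,\omega)$ is only pre-$n$-plectic, Hamiltonian vector fields need not be unique, but what matters is that \emph{some} Hamiltonian vector field for $f_1(x)$ is given, namely $v_x$.

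Next, I would invoke Lemma~\ref{conserved-properties2}(iii): for any Hamiltonian $(n{-}1)$-form $\alpha$, $\alpha$ is globally conserved by $v_H$ if and only if $\cL_{v_\alpha}H$ is exact for some (equivalently, every) Hamiltonian vector field $v_\alpha$ of $\alpha$. Applying this with $\alpha=f_1(x)$ and $v_\alpha=v_x$, the statement $f_1(x)\in \cC(v_H)$ is equivalent to the exactness of $\cL_{v_x}H$.

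Finally, by the standing assumption of this subsection the action $x\mapsto v_x$ is globally $H$-preserving, so $\cL_{v_x}H$ is indeed exact for every $x\in\mathfrak g$, which concludes the argument. There is no real obstacle here: the ``work'' was already absorbed into Lemma~\ref{conserved-properties2}, and the proof is essentially identical to that of Lemma~\ref{Lemma:loc-pres}(i), with ``closed'' replaced by ``exact'' throughout.
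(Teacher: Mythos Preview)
Your proof is correct and follows exactly the approach the paper intends: the paper explicitly says the proof of Lemma~\ref{lem:easy} is ``essentially the same'' as that of Lemma~\ref{Lemma:loc-pres}(i), which in turn is a direct application of Lemma~\ref{conserved-properties2}, now using part~(iii) instead of~(ii). Your reduction via Lemma~\ref{conserved-properties2}(iii) to the exactness of $\cL_{v_x}H$, combined with the globally $H$-preserving hypothesis, is precisely this argument.
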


\begin{remark}
Notice that Lemma \ref{Lemma:loc-pres} and Lemma \ref{lem:easy} hold for any element $\ham{n-1}$ whose Hamiltonian vector field is $v_x$.
\end{remark}

\subsection{Conserved quantities from strictly $H$-preserving actions} \label{sec:strict}

In this subsection we assume that $(M,\omega)$ is a pre-$n$-plectic manifold, $H\in\ham{n-1}$ and that $(f):\mathfrak g\to \li(M,\omega)$ is the co-momentum of a \emph{strictly} $H$-preserving infinitesimal action $\mathfrak g\to \mathfrak X(M,\omega),x\mapsto v_x$.
The assumption that the action is strictly $H$-preserving can be easily realized 
if there is a connected compact Lie group $G$ with a smooth action on $M$ whose differential is the infinitesimal action of $\mathfrak g$ on $M$, see Lemma \ref{lem:strictp}.\\

To prove a stronger result than Proposition \ref{prop:loc-conserved} in this situation we need the following observation (cf., e.g., \cite[Lemma 3.4]{MadsenSwannClosed}).

\begin{lemma}\label{tech_lemma}
Let $M$ be a manifold and let $\Omega$ be a not necessarily closed differential form on $M$. For all $m \geq 1$ and all vector fields $v_1,\dots,v_m$ in the Lie algebra $\mathfrak X(M)$ we have:
\begin{align*} 
(-1)^{m}d \iota(v_{1} \wedge\cdots \wedge v_{m}) \Omega &= 
\iota(\partial(v_{1}\wedge\ldots\wedge v_{m}))\Omega
+\sum_{1=1}^{m} (-1)^{i} \iota( v_{1} \wedge \cdots
 \wedge \hat{v}_{i} \wedge \cdots \wedge {v}_{m})\cL_{v_i}\Omega\\
&+ \iota( v_{1} \wedge \cdots
 \wedge {v}_{m}) d\Omega.
\end{align*}
\end{lemma}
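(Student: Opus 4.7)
The plan is to argue by induction on $m$, using Cartan's magic formula $\mathcal{L}_v = d\iota_v + \iota_v d$ together with the graded commutation relation $[\mathcal{L}_v,\iota_w] = \iota_{[v,w]}$. The base case $m=1$ is immediate: since $\partial_1$ is the zero map, the claimed identity reduces to $-d\iota_{v_1}\Omega = -\mathcal{L}_{v_1}\Omega + \iota_{v_1}d\Omega$, which is Cartan's formula.

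For the inductive step, I would peel off the last vector field by writing
$$\iota(v_1\wedge\cdots\wedge v_m)\Omega = \iota_{v_m}\bigl(\iota(v_1\wedge\cdots\wedge v_{m-1})\Omega\bigr)$$
and applying Cartan's formula to the outer $\iota_{v_m}$:
$$d\iota_{v_m}\bigl(\iota(v_1\wedge\cdots\wedge v_{m-1})\Omega\bigr) = \mathcal{L}_{v_m}\iota(v_1\wedge\cdots\wedge v_{m-1})\Omega - \iota_{v_m} d \iota(v_1\wedge\cdots\wedge v_{m-1})\Omega.$$
For the first term on the right, I would push $\mathcal{L}_{v_m}$ through each contraction $\iota_{v_i}$ using $[\mathcal{L}_{v_m},\iota_{v_i}] = \iota_{[v_m,v_i]}$, producing $\iota(v_1\wedge\cdots\wedge v_{m-1})\mathcal{L}_{v_m}\Omega$ plus a sum of correction terms each of which involves contraction with a single commutator $[v_m,v_i]$ alongside the remaining $v_j$. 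For the second term, I would apply the induction hypothesis to $d\iota(v_1\wedge\cdots\wedge v_{m-1})\Omega$ and then contract with $\iota_{v_m}$ from the outside.

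It then remains to reassemble the pieces. The $\iota(\partial(v_1\wedge\cdots\wedge v_{m-1}))\Omega$ produced by induction, once contracted with $v_m$ on the outside, yields precisely those terms of $\iota(\partial(v_1\wedge\cdots\wedge v_m))\Omega$ corresponding to commutators $[v_i,v_j]$ with $i<j\le m-1$. The new commutator terms $\iota_{[v_m,v_i]}$ coming from the Lie-derivative push-through provide exactly the remaining pieces involving $[v_i,v_m]$. This matches the recursive identity
$$\partial(v_1\wedge\cdots\wedge v_m) = \partial(v_1\wedge\cdots\wedge v_{m-1})\wedge v_m + \sum_{i=1}^{m-1}(-1)^{i+m}[v_i,v_m]\wedge v_1\wedge\cdots\wedge\hat v_i\wedge\cdots\wedge v_{m-1}.$$
Meanwhile, the inductive $\iota(v_1\wedge\cdots\hat v_i\cdots\wedge v_{m-1})\mathcal{L}_{v_i}\Omega$ terms, once contracted with $v_m$ and combined with the new $\iota(v_1\wedge\cdots\wedge v_{m-1})\mathcal{L}_{v_m}\Omega$ term, produce the full sum $\sum_{i=1}^m (-1)^i\iota(v_1\wedge\cdots\hat v_i\cdots\wedge v_m)\mathcal{L}_{v_i}\Omega$, and the $\iota d\Omega$ term assembles analogously.

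The main obstacle is purely bookkeeping: ensuring that the anticommutativity of interior products, the flip from $(-1)^{m-1}$ (inductive) to $(-1)^m$ (target) introduced by the outer $\iota_{v_m}$, and the index shifts in the $(-1)^i$ signs on the Lie-derivative sum all align correctly. Once these signs are carefully tracked, the identity follows without further input.
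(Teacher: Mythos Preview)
Your inductive argument is correct and is the standard way to establish this identity. The paper itself does not give a proof of this lemma; it simply states the formula with a reference to \cite[Lemma 3.4]{MadsenSwannClosed}. Your approach---Cartan's formula for the base case, peeling off $\iota_{v_m}$ and using $[\mathcal{L}_{v_m},\iota_{v_i}]=\iota_{[v_m,v_i]}$ together with the recursive description of $\partial$ for the inductive step---is exactly the kind of proof one finds in the cited literature, so there is nothing to compare.
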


\begin{defi}\label{multidef}
Given a differential form $\Omega\in \Omega^\bullet(M)$ and a multi-vector field $Y\in \Gamma(\Lambda^m TM)$, the \emph{Lie derivative of $\Omega$ along $Y$} is defined as a graded commutator,
by $\cL_Y\Omega:=d\iota_Y\Omega-(-1)^m\iota_Yd\Omega$.
\end{defi}
\begin{remark}\label{rem:Lieder}
This definition allows to combine the first and last term in the above formula into a Lie derivative.
Hence the above formula can be written 
$\cL_{v_{1} \wedge \cdots
\wedge {v}_{m}}\Omega 
=(-1)^m[
\iota(\partial(v_{1}\wedge\ldots\wedge v_{m}))\Omega+\sum_{1=1}^{m} (-1)^{i} \iota( v_{1} \wedge \cdots
 \wedge \hat{v}_{i} \wedge \cdots \wedge {v}_{m})\cL_{v_i}\Omega]$.
\end{remark}

\begin{prop}\label{prop:conserved}
Let $p\in Z_k(\mathfrak g)$. Then
$f_k(p)$ is a globally conserved quantity.
\end{prop}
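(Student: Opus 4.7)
The plan is to compute $\mathcal{L}_{v_H} f_k(p)$ explicitly and exhibit a primitive for it, thereby strengthening Proposition \ref{prop:loc-conserved}. The candidate primitive will be
\[
\iota_{v_H} f_k(p) + \vs(k)\, \iota(v_p) H,
\]
so the goal is to show $\mathcal{L}_{v_H} f_k(p)$ equals the exterior derivative of this expression.

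First, I would apply Cartan's magic formula:
$\mathcal{L}_{v_H} f_k(p) = d\,\iota_{v_H} f_k(p) + \iota_{v_H} d f_k(p)$.
The first term is manifestly exact, so the issue is to show that $\iota_{v_H} d f_k(p)$ is exact as well. Since $p\in Z_k(\g)$, the co-momentum equation \eqref{main} reduces to $df_k(p) = -\vs(k)\, \iota(v_p)\omega$. Using $\iota_{v_H}\omega = -dH$ together with the graded-commutation of interior products, one rewrites
\[
\iota_{v_H}\,\iota(v_p)\omega = (-1)^k \iota(v_p)\iota_{v_H}\omega = -(-1)^k \iota(v_p) dH,
\]
reducing the whole problem to showing that $\iota(v_p) dH$ is exact.

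The key step is then to apply Lemma \ref{tech_lemma} with $\Omega = H$ and the multivector $v_p = v_{x_1}\wedge\cdots\wedge v_{x_k}$. Two inputs of the lemma vanish under our hypotheses: the sum $\sum_i (-1)^i \iota(\ldots \hat v_{x_i}\ldots)\mathcal{L}_{v_{x_i}} H$ vanishes term by term because the action is \emph{strictly} $H$-preserving; and the boundary term $\iota(\partial v_p) H$ vanishes because the assignment $x\mapsto v_x$ is a Lie algebra homomorphism, whence $\partial(v_{x_1}\wedge\cdots\wedge v_{x_k}) = v_{\partial p} = 0$ for $p\in Z_k(\g)$. What remains of the lemma is exactly
\[
(-1)^k d\,\iota(v_p) H = \iota(v_p)\, dH,
\]
so $\iota(v_p) dH$ is indeed exact, with primitive $(-1)^k \iota(v_p) H$.

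Assembling these pieces and tracking the signs, one gets
\[
\mathcal{L}_{v_H} f_k(p) = d\,\iota_{v_H} f_k(p) - \vs(k)\iota_{v_H}\iota(v_p)\omega = d\bigl(\iota_{v_H} f_k(p) + \vs(k)\iota(v_p) H\bigr),
\]
which is the desired exactness. The main obstacle is simply bookkeeping: correctly matching the Lie-algebra boundary $\partial$ on $\Lambda^\bullet \g$ with the analogous operator on multivector fields (which is the point where the Lie homomorphism property is used) and threading the signs $\vs(k)$ and $(-1)^k$ through the computation. No new geometric input beyond strict $H$-preservation and the cycle condition is required.
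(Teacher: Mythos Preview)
Your proof is correct and follows essentially the same route as the paper: both use Cartan's formula together with the co-momentum equation \eqref{main} to reduce to showing $\iota(v_p)dH$ is exact, then invoke Lemma~\ref{tech_lemma} (using strict $H$-preservation and the cycle condition) to obtain the same explicit primitive $\iota_{v_H} f_k(p) + \vs(k)\,\iota(v_p) H$. Your write-up is slightly more detailed in explaining why the boundary term in Lemma~\ref{tech_lemma} vanishes (via the Lie homomorphism $x\mapsto v_x$ identifying $\partial v_p$ with $v_{\partial p}$), but the argument is the same.
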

\begin{proof} We have
\begin{align*}
\iota_{v_H} df_k(p)
=-\vs(k)\iota_{v_H}\iota(v_p)\omega
=(-1)^k\vs(k)\iota(v_p)dH
=\vs(k)d(\iota(v_p)H)
\end{align*}
where we used 
Equation \eqref{main} in the first equality, and in the last equality Lemma \ref{tech_lemma} applied to the form $H$, as well as the $\mathfrak g$-invariance of $H$ and once more the assumption $p\in Z_k(\g)$.

Therefore we conclude from Cartan's formula that
$$\cL_{v_H} f_k(p)=d\big(\iota_{v_H} f_k(p)+
\vs(k)\iota(v_p)H\big).$$
\end{proof}

\begin{remark}
In the symplectic case, a homotopy co-momentum map boils down to its first component, $f_1:\mathfrak{g}\to
 \Omega^0(M)$, a classical co-momentum map. Upon observing that $Z_1(\mathfrak g)=\mathfrak g$, the preceding proposition
then reduces to the well-known obvious but important fact that if a Hamiltonian function $H$ is $\mathfrak g$-invariant, then
for all $x$ in $\mathfrak g$ we have that $\{f_1(x),H\}=\cL_{v_H} f_1(x)=0$.
\end{remark}

In particular, by Proposition \ref{prop:conserved},
$f_1(x)$  is a globally conserved quantity for all $x\in \mathfrak g$.
Even in the case at hand of strictly $H$-preserving actions, $f_1(x)$ is not strictly conserved in general, as the following example shows.
\begin{ep}
Let $M=\mathbb R^3$, $\omega=dx\wedge dy\wedge dz$ and $H=-xdy$. Then $v_H=\frac{\partial}{\partial z}$. Furthermore we consider $\alpha=zdx$ and the $\mathbb R$-action given by $\mathfrak g=\mathbb R\to \mathfrak X(M)$, $1\mapsto v_\alpha=-\frac{\partial}{\partial y}$. This action clearly admits a co-momentum map determined by $f_1(1)=\alpha$. Then $\cL_{v_\alpha}H=0$ but $\cL_{v_H}\alpha=dx\neq 0$.
\end{ep}

More is true: even if one assumes that $x\in B_1(\g)$ is a boundary, $f_1(x)$ is still not strictly {conserved}  in general, as Example \ref{magnetic:ep} below shows.

Specializing $k$ to $n$ in Proposition \ref{prop:conserved}, we obtain scalar functions 
$f_n(x_1,\dots,x_n)$ on $M$. Assembling these functions we obtain a map $M\to Z_n(\g)^*$, very similar to the multi-momentum maps of Madsen-Swann, except that it is not equivariant in general. As in the symplectic case, it satisfies: 
\begin{cor}
The vector field $v_H$ is tangent to the level sets of the map
$M\overset{\phi}\to Z_n(\g)^*$ given by $\phi(m)(p)=f_n(p)(m)$.
\end{cor}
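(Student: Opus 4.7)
The plan is to deduce this corollary as an immediate specialization of Proposition \ref{prop:conserved} to the top degree $k=n$, using the observation that in degree $0$ the notion of ``globally conserved'' coincides with ``strictly conserved''. Concretely, tangency of $v_H$ to the level sets of $\phi\colon M\to Z_n(\g)^*$ is equivalent to the vanishing of $v_H(\phi(\cdot)(p))=v_H(f_n(p))$ for every $p\in Z_n(\g)$. Since $f_n(p)\in\Omega^0(M)=C^\infty(M)$, this directional derivative equals $\cL_{v_H}f_n(p)$, so it suffices to show $\cL_{v_H}f_n(p)=0$.

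First I would invoke Proposition \ref{prop:conserved} for $k=n$, which gives the explicit formula
\[
\cL_{v_H} f_n(p)\;=\;d\bigl(\iota_{v_H} f_n(p)+\vs(n)\,\iota(v_p)H\bigr).
\]
Then I would observe that both terms in the parenthesis vanish for purely degree-theoretic reasons: $\iota_{v_H}f_n(p)=0$ because $f_n(p)$ is a $0$-form, and $\iota(v_p)H=0$ because $v_p=v_{x_1}\wedge\cdots\wedge v_{x_n}$ has multivector degree $n$ while $H\in\Omega^{n-1}(M)$, so the contraction is of a form by a multivector of strictly higher degree. Hence $\cL_{v_H}f_n(p)=d(0)=0$.

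Alternatively, and perhaps more conceptually, one can phrase the argument by noting that $\Omega^{-1}(M)=0$, so a $0$-form is ``globally conserved'' exactly when it is ``strictly conserved''; Proposition \ref{prop:conserved} thus automatically upgrades in top degree, giving $\cL_{v_H}f_n(p)=0$ and hence $v_H\cdot f_n(p)=0$ for every $p\in Z_n(\g)$. This is precisely the tangency of $v_H$ to the fibres of $\phi$.

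There is really no substantive obstacle here: the heavy lifting has already been done in Proposition \ref{prop:conserved}, and the only step to verify is the degree count that kills the two boundary terms in top degree. The statement should therefore read as a one-line consequence, and the main point worth emphasising in the write-up is simply that top-degree components of the co-momentum map automatically give rise to \emph{strictly} (not merely globally) conserved scalar quantities, exactly mirroring the familiar symplectic situation where the components of the momentum map are constants of motion.
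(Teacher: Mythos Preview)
Your proposal is correct and essentially matches the paper's proof: the paper also reduces to showing $\cL_{v_H}f_n(p)=0$ and concludes by noting that $\cL_{v_H}f_n(p)$ is an exact function, hence zero. Your first argument (killing the two terms in the explicit formula by degree count) and your second argument (an exact $0$-form vanishes) are just two ways of saying the same thing, and the paper uses the latter phrasing.
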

\begin{proof}
We have to show that $v_{H}(m)\in \ker(T_m\phi:T_mM\to T_{\phi(m)}Z_n(\mathfrak g)^*=Z_n(\mathfrak g)^*)$. We have
\[\big((T_m\phi)(v_H(m))\big)(p)=(df_n(p))(v_H(m))=(\iota_{v_H}df_n(p))(m)=(\cL_{v_H}f_n(p))(m)=0,\]
where the last equation uses the fact that $\cL_{v_H}f_n(p)$ is exact and an exact function is necessarily 0.
\end{proof} 
\begin{remark}
An analogue of this result, where $Z_n(\mathfrak g)$ is substituted by $B_n(\mathfrak g)$ holds in the setting of Proposition \ref{prop:ex-loc-conserved}.
\end{remark}

We close this subsection by showing how a co-momentum map yields elements of the algebra $\cA(v_H)$ from Lemma \ref{lem:module}.
\begin{lemma}For a strictly $H$-preserving infinitesimal action $x\mapsto v_x$ we have:
\begin{compactenum}[(i)]
\item Let $p\in Z_{k}(\g)$ for $k\ge 1$. 
Then $\iota(v_p)\omega \in \cA(v_H)$.

\item Let $(f)$ be a co-momentum map for the $\mathfrak g$-action. Let $p\in Z_{k-1}(\g)$, for $k\ge 2$.
Then $l_k(f_1(x_1),\dots,f_1(x_{k-1}),H) \in \cA(v_H)$.
\end{compactenum}
\begin{proof}
Let us first observe that if $\alpha\in \cC(v_H)$, then $d\alpha\in \cA(v_H)$. In fact $d\alpha$, being exact, is closed. Furthermore $\cL_{v_H}(d\alpha)=d(\cL_{v_H}\alpha)=0$ since $\cL_{v_H}\alpha$ is zero.
\begin{compactenum}[(i)]
\item By Proposition \ref{prop:conserved}, $f_k(p)$ is a globally conserved quantity. As $\iota(v_p)\omega=\pm df_k(p)$ due to Equation \eqref{main}, it is an element of $\cA(v_H)$ because of the preceding observation.

\item By Proposition \ref{conserved-subalgebra}, $l_k(f_1(x_1),\dots,f_1(x_{k-1}),H)$ is conserved. We compute
\begin{align*}
& d(\iota(v_1\wedge\dots \wedge v_{k-1})H)=(-1)^{k-1}(\iota(v_1\wedge\dots \wedge v_{k-1})dH)\\
&=- (\iota(v_1\wedge\dots \wedge v_{k-1}\wedge v_H)\omega)
=-\vs(k)l_k(f_1(x_1),\dots,f_1(x_{k-1}),H),
\end{align*}
so $l_k(f_1(x_1),\dots,f_1(x_{k-1}),H)$ is exact, and in particular closed.
\end{compactenum}
\end{proof}
\end{lemma}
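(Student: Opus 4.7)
The plan is to verify, for each of the two claims, the two defining conditions of $\cA(v_H)$: closedness and vanishing of $\cL_{v_H}$.

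For part (i), closedness of $\iota(v_p)\omega$ will follow from Lemma \ref{tech_lemma} applied to the closed form $\omega$. Since the multisymplectic assumption gives $\cL_{v_{x_i}}\omega=0$ and $d\omega=0$, every term on the right-hand side vanishes except $\iota(\partial(v_{x_1}\wedge\cdots\wedge v_{x_k}))\omega$; this last term equals $\iota(v_{\partial p})\omega$ because $v$ is a Lie algebra homomorphism, and vanishes since $p\in Z_k(\g)$. Vanishing of $\cL_{v_H}\iota(v_p)\omega$ is then exactly Lemma \ref{Lemma:loc-pres}(iii). (Alternatively, Equation \eqref{main} identifies $\iota(v_p)\omega$ up to sign with $df_k(p)$; combined with the easy observation that $d\alpha\in\cA(v_H)$ whenever $\alpha\in\cC(v_H)$ and the fact that $f_k(p)$ is globally conserved by Proposition \ref{prop:conserved}, one also concludes.)

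For part (ii), set $\Omega:=l_k(f_1(x_1),\dots,f_1(x_{k-1}),H)$. Unpacking the definition of $l_k$ identifies $\Omega$ with $\vs(k)\,\iota(v_p\wedge v_H)\omega$, where $p:=x_1\wedge\cdots\wedge x_{k-1}$. Substituting $\iota_{v_H}\omega=-dH$ rewrites $\Omega$ as a constant multiple of $\iota(v_p)dH$. Applying Lemma \ref{tech_lemma} now with the (non-closed) form $H$: the hypothesis that the action is strictly $H$-preserving kills the $\cL_{v_{x_i}}H$ terms, and $\partial p=0$ kills the boundary term, leaving $\iota(v_p)dH=\pm d(\iota(v_p)H)$. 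Hence $\Omega$ is exact, in particular closed. For the vanishing of $\cL_{v_H}\Omega$ I invoke Proposition \ref{conserved-subalgebra}: each $f_1(x_i)$ lies in $\cC_{loc}(v_H)$ by Lemma \ref{Lemma:loc-pres}(i), and $H\in\cC(v_H)\subset\cC_{loc}(v_H)$ by Lemma \ref{conserved-properties2}(iv), so the proposition delivers $\cL_{v_H}\Omega=0$ at once.

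The main obstacle I anticipate is bookkeeping: part (ii) simultaneously invokes the $L_\infty$-structure (whose definition of $l_k$ builds in Koszul-type signs through $\vs(k)$) and Lemma \ref{tech_lemma} (with its own $(-1)^i$ and $(-1)^m$ prefactors), and one must dovetail the ``algebraic'' route yielding $\cL_{v_H}\Omega=0$ with the ``geometric'' route yielding exactness. Making the two arguments coexist on the same $\Omega$ requires careful tracking of which positional argument of $l_k$ is played by $H$, but no new conceptual ingredients beyond those already established.
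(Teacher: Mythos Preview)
Your proposal is correct and matches the paper's proof closely. For part (ii) you and the paper do exactly the same thing: invoke Proposition \ref{conserved-subalgebra} for $\cL_{v_H}\Omega=0$, then show exactness by rewriting $\Omega$ as $\pm d(\iota(v_p)H)$ via Lemma \ref{tech_lemma} (the paper states the identity $d(\iota(v_p)H)=(-1)^{k-1}\iota(v_p)dH$ without naming the lemma, but it is the same computation). For part (i) your primary route---closedness directly from Lemma \ref{tech_lemma}, strict conservation from Lemma \ref{Lemma:loc-pres}(iii)---is slightly more elementary than the paper's, which goes through the observation $\alpha\in\cC(v_H)\Rightarrow d\alpha\in\cA(v_H)$ together with Proposition \ref{prop:conserved} and Equation \eqref{main}; but you list exactly that argument as your parenthetical alternative, so there is no substantive divergence. (One small bonus of your direct route for (i): it does not invoke the co-momentum map, so it proves the statement under the bare hypothesis of a strictly $H$-preserving action, whereas the paper's argument tacitly uses the standing assumption of the subsection that $(f)$ exists.)
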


\subsection{The homological point of view}\label{subsec:homol}

In this subsection we rephrase the ``generation of conserved quantities'' via a co-momentum in a homological fashion.

 Let $\mathfrak g$ be a Lie algebra acting on a pre-$n$-plectic manifold $(M,\omega)$, let $H$ be a Hamiltonian $(n{-}1)$-form, and $v_H$ a Hamiltonian vector field of $H$. Assume that 
the action is locally $H$-preserving, i.e. $\cL_{v_x}H$ is closed for all $x\in \g$.
The map $\g\to H^{n-1}_{dR}(M), x\mapsto [\cL_{v_x}H]$ measures how far the action is from being globally $H$-preserving. This map is 0 on $[\g,\g]$ and can thus be defined on $H_1(\mathfrak g)=\g/[\g,\g]$. Furthermore it can be extended to a map on the whole Lie algebra homology.

\begin{prop}\label{lem:A}
For every $k=1,\dots,dim(\g)$ the map
\begin{equation*}
A\colon H_k(\g)\to H_{dR}^{n-k}(M),\;\; [p]\mapsto [\cL_{v_p}H]
\end{equation*}
is well-defined.
\end{prop}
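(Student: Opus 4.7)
The plan is to verify two conditions: first, that $\cL_{v_p}H$ is closed for every $p\in Z_k(\g)$, so that its de Rham class is defined; second, that $\cL_{v_p}H$ is exact whenever $p\in B_k(\g)$, so that the class only depends on the homology class of $p$. Both will follow from a single application of Lemma \ref{tech_lemma} to the form $\Omega = dH$.

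I would begin by observing that, since $v\colon\g\to\mathfrak{X}(M)$ is a Lie algebra homomorphism, the extended map $\Lambda^\bullet\g\to\Lambda^\bullet\mathfrak{X}(M)$ intertwines the Chevalley--Eilenberg differential $\partial$ of Definition \ref{def:differential} with the analogous $\partial$ on multivector fields appearing in Lemma \ref{tech_lemma}, so that $\partial(v_p) = v_{\partial p}$. Moreover, by local $H$-preservation, $\cL_{v_x}dH = d\cL_{v_x}H = 0$ for every $x\in\g$, and of course $d^2H = 0$. Consequently, applied to $\Omega=dH$ and a pure wedge $p=x_1\wedge\cdots\wedge x_k$, Lemma \ref{tech_lemma} collapses to the identity
\begin{equation*}
(-1)^k d\iota(v_p) dH = \iota(v_{\partial p}) dH,
\end{equation*}
which extends to all $p\in \Lambda^k\g$ by linearity.

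Next I would use Definition \ref{multidef} to write $\cL_{v_p}H = d\iota(v_p)H - (-1)^k \iota(v_p) dH$. For the first claim, assuming $p\in Z_k(\g)$, applying $d$ and using the displayed identity gives $d\cL_{v_p}H = -(-1)^k d\iota(v_p)dH = -\iota(v_{\partial p})dH = 0$. For the second claim, write $p = \partial q$ with $q\in \Lambda^{k+1}\g$. The term $d\iota(v_p) H$ is trivially exact, while applying the displayed identity to $q$ yields $\iota(v_p) dH = \iota(v_{\partial q}) dH = (-1)^{k+1} d\iota(v_q) dH$, which is also exact. Combining the two contributions with the appropriate signs, $\cL_{v_p}H = d\bigl[\iota(v_p) H + \iota(v_q) dH\bigr]$, hence exact.

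The main obstacle is simply sign bookkeeping in the invocations of Lemma \ref{tech_lemma}. Conceptually, everything reduces to the observation that contracting the closed form $dH$ with multivector fields defines (up to sign) a chain map from $(\Lambda^\bullet\g, \partial)$ to $(\Omega^{\bullet}(M), d)$---and this chain-map property is automatic once $dH$ is strictly $\g$-preserved, which in turn follows from the hypothesis that $H$ is merely locally preserved.
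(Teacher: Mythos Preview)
Your proof is correct and follows essentially the same route as the paper: both arguments apply Lemma \ref{tech_lemma} (the paper via its reformulation in Remark \ref{rem:Lieder}) to the form $dH$, exploiting that $dH$ is strictly $\g$-preserved and closed so that the lemma collapses to a clean relation between $d\iota(v_p)dH$ and $\iota(v_{\partial p})dH$. Your presentation is slightly more streamlined in isolating the single chain-map identity $(-1)^k d\iota(v_p)dH = \iota(v_{\partial p})dH$ and invoking it twice, whereas the paper handles the closedness and exactness steps more separately, but the content is the same.
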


\begin{proof}
 Let $p\in Z_k(\g)$. We first check that $\cL_{v_p}H$ is closed: Putting $v_p=\sum_l v_1^l\wedge...\wedge v_k^l$ one has
$$d\cL_{v_p}H=(-1)^{k+1}\cL_{v_p}dH=-\left(\iota({v_{\partial p}})dH+\sum_l\sum_{i=1}^k(-1)^i \iota(v_{1}^l \wedge \cdots
 \wedge \hat{v}^l_{i} \wedge \cdots \wedge {v}^l_{m})\cL_{v^l_i}dH\right)=0,$$
where the first equality follows from Definition \ref{multidef}, the second from Remark \ref{rem:Lieder} and the last one from $\partial p=0$ and the closeness of $\cL_{v^l_i}H$.

Let $q\in \Lambda^{k+1}\g$. Similarly to above, we write $v_q=\sum_l v_1^l\wedge...\wedge v_{k+1}^l$.
We check that $\cL_{v_{\partial q}}H$ is exact. By the definition of Lie derivative, this follows since 
$$\iota(v_{\partial q})dH=(-1)^{k+1}\cL_{v_q}dH-\sum_{i=1}^{k+1}\sum_l (-1)^{i} \iota( v^l_{1} \wedge \cdots
 \wedge \hat{v}^l_{i} \wedge \cdots \wedge {v}^l_{k+1})\cL_{v^l_i}dH=-d\cL_{v_q}H$$
is exact. Again, here in the first equality we used Remark \ref{rem:Lieder} and in the second that $\cL_{v_i^l}H$ is closed since the action is locally $H$-preserving.
\end{proof}

\begin{remark}\label{rem:A}$ $
\begin{compactenum}[(1)]
\item If the action is globally $H$-preserving, the map $ \g\to H^{n-1}_{dR}(M)$ is zero, but the higher components of $A$ do not necessarily vanish. This is exhibited by Example \ref{local-example}: $\iota(v_a\wedge v_b)dH=-\iota(\frac{\partial}{\partial x}\wedge \frac{\partial}{\partial y})(dx\wedge dy)=-1$
is closed but not exact.
\item If the action is strictly $H$-preserving, then the map $A$ is identically zero.
Indeed, for every $p\in Z_k(\g)$ we have 
$\cL_{v_p}H=0$, as can be seen applying Lemma \ref{tech_lemma} to $H$.
\end{compactenum}
\end{remark}

When a  co-momentum map exists, we can be more explicit:
 \begin{lemma}\label{lem:rewrite} If $(f)$ is a  co-momentum map $(f)$ for the $\mathfrak g$-action, then the map $A$ can be written as follows: for all $p\in Z_k(\g)$,
$$A([p])=-\vs(k)
[\cL_{v_H}f_k(p)].$$
\end{lemma}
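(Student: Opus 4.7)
The key identity $A([p]) = [\cL_{v_p}H] = -\vs(k)[\cL_{v_H} f_k(p)]$ is an equality in de~Rham cohomology, so my strategy is to compute both sides as differential forms modulo exact terms and match them. The two main tools are Cartan's formula (applied to $v_H$) and Lemma \ref{tech_lemma} (applied with $\Omega = H$ and the multivector $v_{x_1}\wedge\cdots\wedge v_{x_k}$).

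First I would expand the right-hand side via Cartan's formula: $\cL_{v_H} f_k(p) = d\iota_{v_H} f_k(p) + \iota_{v_H} df_k(p)$. The first summand is $d$-exact, hence negligible in cohomology, so in $H^{n-k}_{dR}(M)$ we have $[\cL_{v_H} f_k(p)] = [\iota_{v_H} df_k(p)]$. Since $p \in Z_k(\g)$, equation \eqref{main} gives $df_k(p) = -\vs(k)\iota(v_p)\omega$. Commuting $\iota_{v_H}$ past the $k$ contractions making up $\iota(v_p)$ produces a sign $(-1)^k$, and then $\iota_{v_H}\omega = -dH$ turns this into
\[
\iota_{v_H} df_k(p) = \vs(k)(-1)^k \iota(v_p)\, dH \;+\; \text{(0)},
\]
so that $[\cL_{v_H} f_k(p)] = \vs(k)(-1)^k[\iota(v_p)dH]$.

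Next I would turn to the left-hand side. Applying Lemma \ref{tech_lemma} to $H$ with the multivector $v_{x_1}\wedge\cdots\wedge v_{x_k}$ yields
\[
(-1)^k d\iota(v_p)H = \iota(\partial(v_p))H + \sum_{i=1}^k (-1)^i \iota(v_{x_1}\wedge\cdots\wedge\hat v_{x_i}\wedge\cdots\wedge v_{x_k})\cL_{v_{x_i}}H + \iota(v_p)dH.
\]
Because $v\colon\g\to\mathfrak X(M)$ is a Lie algebra homomorphism, $\partial(v_p) = v_{\partial p}$, which vanishes by hypothesis $p\in Z_k(\g)$. Hence, modulo exact forms,
\[
[\iota(v_p)dH] \;=\; -\Bigl[\sum_{i=1}^k (-1)^i \iota(v_{x_1}\wedge\cdots\wedge\hat v_{x_i}\wedge\cdots\wedge v_{x_k})\cL_{v_{x_i}}H\Bigr].
\]
The same computation (or directly Remark \ref{rem:Lieder}) shows that the right-hand side of the previous display equals $-(-1)^k[\cL_{v_p}H]$, which gives $[\iota(v_p)dH] = (-1)^k[\cL_{v_p}H]$, i.e.\ $[\cL_{v_p}H]$ agrees (up to the explicit sign $(-1)^k$) with $[\iota(v_p)dH]$.

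Finally I would combine the two computations:
\[
[\cL_{v_H} f_k(p)] = \vs(k)(-1)^k[\iota(v_p)dH] = \vs(k)[\cL_{v_p}H] = \vs(k)\,A([p]),
\]
which, since $\vs(k)^2 = 1$, is equivalent to $A([p]) = -\vs(k)[\cL_{v_H} f_k(p)]$. The only subtle step is bookkeeping the two sources of signs (the commutation of $\iota_{v_H}$ through $\iota(v_p)$, and the factor $(-1)^k$ in Lemma \ref{tech_lemma} / Definition \ref{multidef}); the conceptual content is just Cartan's formula on $v_H$ together with the defining equation \eqref{main} of a co-momentum map, fed into the multivector Cartan formula of Lemma \ref{tech_lemma}. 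Well-definedness of $A$ on homology is already Proposition \ref{lem:A}, so no further verification is needed on the left-hand side.
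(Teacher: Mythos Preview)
Your approach is essentially the paper's, but the sign bookkeeping slips twice, and the two errors happen to cancel.

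First slip: after correctly stating that the right-hand side of your display equals $-(-1)^k[\cL_{v_p}H]$, you immediately write ``which gives $[\iota(v_p)dH] = (-1)^k[\cL_{v_p}H]$''. That drops a minus sign; the correct relation is $[\iota(v_p)dH] = -(-1)^k[\cL_{v_p}H]$. (You can read this off directly from Definition~\ref{multidef}: $\cL_{v_p}H = d\iota(v_p)H - (-1)^k\iota(v_p)dH$, so modulo exact forms $[\iota(v_p)dH] = -(-1)^k[\cL_{v_p}H]$.)

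Second slip: in the final step you arrive at $[\cL_{v_H} f_k(p)] = \vs(k)\,A([p])$ and claim that $\vs(k)^2=1$ makes this equivalent to $A([p]) = -\vs(k)[\cL_{v_H} f_k(p)]$. But $\vs(k)^2=1$ yields $A([p]) = +\vs(k)[\cL_{v_H} f_k(p)]$, off by a sign from the target. With the first slip corrected, the combination instead reads
\[
[\cL_{v_H} f_k(p)] \;=\; \vs(k)(-1)^k\cdot\bigl(-(-1)^k\bigr)[\cL_{v_p}H] \;=\; -\vs(k)\,A([p]),
\]
and then $\vs(k)^2=1$ gives the desired identity legitimately.

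As a stylistic point: the detour through Lemma~\ref{tech_lemma} followed by Remark~\ref{rem:Lieder} is circular, since Remark~\ref{rem:Lieder} is precisely Lemma~\ref{tech_lemma} combined with Definition~\ref{multidef}; you are expanding and then re-collecting the same expression. The paper's proof invokes Definition~\ref{multidef} once to obtain $[\cL_{v_p}H] = (-1)^k[\iota(v_p)\iota_{v_H}\omega]$ (using $dH=-\iota_{v_H}\omega$), then commutes $\iota_{v_H}$ past $\iota(v_p)$, applies eq.~\eqref{main}, and finishes with Cartan's formula for $v_H$ --- the same ingredients as yours, in a shorter order.
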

\begin{proof}
Let $p\in Z_k(\g)$. We have
$A([p])=[\cL_{v_p}H]=(-1)^{k}[\iota(v_p)\iota_{v_H}\omega]$
using the definition of Lie derivative for multivector fields (see Remark \ref{rem:Lieder}).
We can express this in terms of the co-momentum map using 
$$(-1)^k\iota(v_p)\iota_{v_H}\omega=\iota_{v_H}\iota(v_p)\omega=-\vs(k)\iota_{v_H}df_k(p)=
-\vs(k)(-d\iota_{v_H}f_k(p)+\cL_{v_H}f_k(p))
.$$ Passing to the cohomology class finishes the proof.
\end{proof}

Lemma \ref{lem:rewrite} has several consequences:

\begin{remark}$ $
\begin{compactenum}[(1)]
\item The form $f_k(p)$ is locally conserved if $p\in Z_k(\g)$ and globally conserved if $p\in B_k(\g)$. Hence we recover 
Proposition \ref{prop:loc-conserved} and Proposition \ref{prop:ex-loc-conserved}.

\item There is a canonical injective map $J\colon \frac{\cC_{loc}(v_H)}{\cC(v_H)} \hookrightarrow H_{dR}(M), [\alpha]\mapsto [\cL_{v_H}\alpha]$, as follows immediately from the definitions. The map 
 $A$ factors as
$$H_k(\mathfrak g ){\longrightarrow} \frac{\cC_{loc}^{n-k}(v_H)}{\cC^{n-k}(v_H)}\overset{J}{\longrightarrow} H_{dR}^{n-k}(M)$$
for every $k$, where the first map is induced by $f_k$ multiplied by $-\vs(k)$.
In particular, 
the map $A$ takes values in the subspace $J(\frac{\cC_{loc}(H)}{\cC(H)})$ of $H_{dR}(M)$.

\item If the action is strictly $H$-preserving, by Remark \ref{rem:A} (2), 
 $f_k(p)$ is globally conserved for all $p\in Z_k(\g)$. Hence we recover
 Proposition \ref{prop:conserved}.
\end{compactenum}
\end{remark}

\section{Examples and constructions}\label{sec:exconstr}
In the previous section we proved the existence of conserved quantities in the following set-up: $G$ is a Lie group acting on a pre-$n$-plectic manifold $(M,\omega)$, $H\in \ham{n-1}$ is locally, globally or strictly preserved, and $(f)$ is a co-momentum map . Here we give several constructions that assure the existence of a preserved Hamiltonian $H$ and of a co-momentum map,  in \S \ref{subsec:cons} and \S \ref{subsec:isot}. Further, in  \S \ref{g-plus-r} and \S \ref{subsec:magnetic} we make additional remarks on conserved quantities.

\subsection{Constructing preserved Hamiltonians}\label{subsec:cons}

In this subsection we consider a natural geometric situation in which the above machinery can be applied. We do not assume the existence of an invariant Hamiltonian form here, but we always assume a smooth action $\vartheta: M\times G\to M$ of a
connected Lie group $G$ on a pre-$n$-plectic manifold $(M,\omega)$ such that $\vartheta^*_g(\omega)=\omega$ for all $g\in G$.

\begin{lemma}\label{lem:strictp}
Consider a connected compact Lie group $G$ acting on the pre-$n$-plectic manifold $(M,\omega)$, and let  $\tilde{H}\in \ham{n-1}$ which is \emph{locally} preserved by the action. Then there exists $ {H}\in \ham{n-1}$ which is \emph{strictly} preserved by the action and has the following property: any Hamiltonian vector field of $\tilde{H}$ is also a Hamiltonian vector field of ${H}$. 
\end{lemma}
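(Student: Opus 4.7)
The natural approach is averaging over the compact group. Let $dg$ denote the normalized bi-invariant Haar measure on $G$ and set
\[
H \;:=\; \int_{G} \vartheta_g^*\tilde{H}\, dg \;\in\; \Omega^{n-1}(M).
\]
Smoothness of $H$ follows from compactness of $G$ together with standard properties of integration of smooth form-valued functions. I will verify the three required properties in turn: (a) $H-\tilde{H}$ is closed, which will make $H$ Hamiltonian and identify its Hamiltonian vector fields with those of $\tilde{H}$; (b) $H$ is $G$-invariant, which gives strict preservation.

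For (a), the key point is to show that $\vartheta_g^*\tilde{H}-\tilde{H}$ is closed for every $g\in G$. For $g=\exp(tx)$ with $x\in\mathfrak{g}$, the fundamental theorem of calculus gives
\[
\vartheta_{\exp(tx)}^*\tilde{H}-\tilde{H}\;=\;\int_0^t \vartheta_{\exp(sx)}^*\bigl(\cL_{v_x}\tilde{H}\bigr)\,ds,
\]
which is closed since $\cL_{v_x}\tilde{H}$ is closed by local $H$-preservation and pullback commutes with $d$. For a general product $g=\exp(x_1)\cdots\exp(x_N)$, the identity $\vartheta_{gh}^*\tilde{H}-\tilde{H}=\vartheta_g^*(\vartheta_h^*\tilde{H}-\tilde{H})+(\vartheta_g^*\tilde{H}-\tilde{H})$ (arising from $\vartheta_g\circ\vartheta_h=\vartheta_{hg}$ for a right action) allows us to conclude closedness inductively. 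Since $G$ is connected, every element is such a product, so $\vartheta_g^*\tilde{H}-\tilde{H}$ is closed for all $g$; integrating over $G$, $H-\tilde{H}$ is closed, hence $dH=d\tilde{H}$. Therefore any vector field $v$ with $\iota_v\omega=-d\tilde{H}$ also satisfies $\iota_v\omega=-dH$, so $H\in\ham{n-1}$ and every Hamiltonian vector field of $\tilde{H}$ is a Hamiltonian vector field of $H$.

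For (b), bi-invariance of $dg$ (valid on any compact group) yields
\[
\vartheta_h^*H \;=\; \int_G \vartheta_{hg}^*\tilde{H}\,dg \;=\; \int_G \vartheta_{g'}^*\tilde{H}\,dg' \;=\; H
\]
for all $h\in G$, using $\vartheta_h^*\vartheta_g^*=\vartheta_{hg}^*$ and the substitution $g'=hg$. Differentiating at the identity in directions $x\in\mathfrak{g}$ gives $\cL_{v_x}H=0$, i.e.\ strict $H$-preservation. The main conceptual hurdle is just the first step of (a) -- verifying that local preservation, together with connectedness, is enough to make the cocycle $g\mapsto [\vartheta_g^*\tilde{H}-\tilde{H}]\in H^{n-1}_{dR}(M)$ vanish pointwise; everything else is a routine consequence of averaging.
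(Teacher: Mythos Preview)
Your proof is correct and follows the same averaging approach as the paper. The paper's argument is just slightly more direct: rather than showing each $\vartheta_g^*\tilde H-\tilde H$ is closed, it observes at once that $d\tilde H$ is $G$-invariant (since $\cL_{v_x}d\tilde H=d\cL_{v_x}\tilde H=0$ and $G$ is connected), which immediately gives $dH=\int_G\vartheta_g^*(d\tilde H)\,dg=d\tilde H$.
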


\begin{proof}
Define ${H}:=\int_G \vartheta^*_g(\tilde H)\mu(dg)$, the average of $\tilde{H}$ using the normalized Haar measure $\mu(dg)$ on $G$. Then $H$ is strictly preserved by the action. Furthermore we have $$dH=\int_G \vartheta^*_g(d\tilde H)\mu(dg) 
=d\tilde H.$$ The last equality holds because 
$\vartheta^*_g(d\tilde H)=d\tilde H$ for all $g\in G$, as a consequence of 
 $d\cL_{v_x}\tilde H=\cL_{v_x}d\tilde H=0$ for all $x\in \g$. Hence the Hamiltonian vector fields of $\tilde{H}$ are exactly the Hamiltonian vector fields of $H$. 
\end{proof}

\begin{prop}\label{prop:vvh} 
Consider a connected compact Lie group $G$
 acting on the pre-$n$-plectic manifold $(M,\omega)$. Let $v$ be a $G$-invariant vector field on $M$ with $\cL_v\omega=0$. Suppose that $H^{n}_{dR}(M)=0$. Then $v$ is the Hamiltonian vector field of some Hamiltonian form 
$H$
which is $G$-invariant, i.e. strictly preserved by the action.

\end{prop}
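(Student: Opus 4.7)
The plan is as follows. Since $\mathcal{L}_v\omega = 0$ and $d\omega=0$, Cartan's magic formula gives $d(\iota_v\omega) = \mathcal{L}_v\omega - \iota_v d\omega = 0$, so $\iota_v\omega$ is a closed $n$-form. Using the hypothesis $H^n_{dR}(M)=0$, we can pick an $(n-1)$-form $\tilde H\in\Omega^{n-1}(M)$ with $d\tilde H = -\iota_v\omega$. By Definition \ref{Hamiltonian}, $\tilde H\in\ham{n-1}$ and $v$ is one of its Hamiltonian vector fields.

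The form $\tilde H$ is generally not $G$-invariant, so I would average it using the normalized Haar measure $\mu$ on the compact group $G$ as in the proof of Lemma \ref{lem:strictp}, setting
\[
H := \int_G \vartheta_g^*(\tilde H)\,\mu(dg).
\]
Then $H$ is manifestly $G$-invariant, hence strictly preserved by the action.

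It remains to show that $v$ is a Hamiltonian vector field for $H$, i.e.\ $dH = -\iota_v\omega$. Commuting $d$ with the integral and using $d\tilde H = -\iota_v\omega$, I obtain
\[
dH = \int_G \vartheta_g^*(d\tilde H)\,\mu(dg) = -\int_G \vartheta_g^*(\iota_v\omega)\,\mu(dg).
\]
Now the key point: because both $v$ and $\omega$ are $G$-invariant (the former by assumption, the latter because the action is multisymplectic), one has $\vartheta_g^*(\iota_v\omega) = \iota_v\omega$ for every $g\in G$. Therefore $dH = -\iota_v\omega$, proving that $v$ is a Hamiltonian vector field of the $G$-invariant form $H$.

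The only subtle point in the argument is the interchange of $G$-invariance of the vector field with the contraction under pullback; this is routine but is what makes the whole construction go through. Note that the compactness and connectedness of $G$ enter only in order to have a bi-invariant normalized Haar measure and to identify strict $G$-invariance with infinitesimal invariance under $\mathfrak{g}$.
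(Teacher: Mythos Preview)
Your argument is correct and follows essentially the same route as the paper: obtain a primitive $\tilde H$ of $-\iota_v\omega$ using $H^n_{dR}(M)=0$, average over $G$ with the normalized Haar measure, and use the $G$-invariance of $\iota_v\omega$ (coming from the invariance of both $v$ and $\omega$) to conclude that the averaged form $H$ still satisfies $dH=-\iota_v\omega$. Your write-up is slightly more detailed than the paper's, but the method is the same.
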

\begin{proof}
The condition $\cL_v\omega=0$ implies that $\iota_v\omega$ is closed, and by our cohomological assumption it is exact: 
\begin{equation*}\label{eq:tildeH}
\iota_v\omega=-d\tilde{H}
\end{equation*}
for some $\tilde{H}\in \Omega_{Ham}^{n-1}(M)$. Now we average both sides of the above equation.
Notice that $\iota_v\omega$ is $G$-invariant, since $v$ and $\omega$ are, hence the averaged form $H:=\int_G \vartheta^*_g(\tilde H)\mu(dg)$ (where $\mu$ is the normalized Haar measure on $G$) satisfies the same equation: $\iota_v\omega=-d{H}$.
\end{proof}

An interesting special case is the one of volume forms:
\begin{cor}\label{cor:volform}
Consider a connected compact Lie group $G$ acting 
 on $(M,\omega)$,
 where $\omega$ is a volume form. Let $v$ be a $G$-invariant vector field on $M$ which is divergence-free (i.e. $\cL_v\omega=0$). Suppose that $H^{dim(M)-1}_{dR}(M)=0$.
 Then $v$ is the Hamiltonian vector field of some $G$-invariant Hamiltonian form $H$.
\end{cor}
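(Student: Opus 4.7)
The plan is to recognize Corollary \ref{cor:volform} as a direct specialization of Proposition \ref{prop:vvh}, so that the proof reduces to checking that all the hypotheses match. First I would set $m := \dim(M)$ and observe that a volume form $\omega$ on $M$ is automatically closed (being a top-degree form on $M$) and non-degenerate. Setting $n := m-1$, the pair $(M,\omega)$ is then a pre-$n$-plectic manifold (in fact, $n$-plectic) with $n+1 = m$.

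Next I would translate the cohomological assumption: the hypothesis $H^{\dim(M)-1}_{dR}(M) = 0$ is precisely $H^{n}_{dR}(M) = 0$, which is exactly the cohomological assumption required by Proposition \ref{prop:vvh}. The remaining hypotheses of that proposition, namely that $G$ is a connected compact Lie group acting on $(M,\omega)$ preserving $\omega$, that $v$ is $G$-invariant, and that $\cL_v\omega = 0$, are all present in the corollary by assumption (the $G$-invariance of $\omega$ is implicit in speaking of $\omega$ as a $G$-equivariantly fixed volume form on which $G$ acts, but even without that one could replace $\omega$ by its average, which remains a volume form since any positive multiple of a volume form is a volume form).

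Applying Proposition \ref{prop:vvh} then produces a Hamiltonian form $H \in \Omega^{n-1}_{\mathrm{Ham}}(M) = \Omega^{m-2}_{\mathrm{Ham}}(M)$ which is strictly preserved by the $G$-action, i.e. $G$-invariant, and whose Hamiltonian vector field is $v$. This is exactly the conclusion of the corollary. There is no real obstacle here: the entire argument is a hypothesis-matching exercise, and the only mild subtlety worth flagging explicitly is the degree shift between the dimension of $M$ and the degree of the Hamiltonian form, which accounts for both the particular cohomology group appearing and the degree $m-2$ of the resulting Hamiltonian form.
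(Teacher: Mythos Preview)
Your proposal is correct and matches the paper's approach exactly: the paper states the corollary immediately after Proposition \ref{prop:vvh} with the lead-in ``An interesting special case is the one of volume forms'' and gives no separate proof, so recognizing the hypothesis-matching (with $n = \dim(M)-1$) is all that is needed. Your aside about averaging $\omega$ is unnecessary, since the standing assumption at the start of \S\ref{subsec:cons} is that the $G$-action preserves $\omega$.
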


\begin{remark}
If $M$ is compact and simply connected, then $H^{dim(M)-1}_{dR}(M)$ vanishes by Poincar\'{e} duality and the above result is applicable.
\end{remark}

The following statement is a variation of Proposition \ref{prop:vvh},
in which the compactness assumption on $G$ is replaced with the condition $H^{n-1}_{dR}(M)=0$ and which leads to a globally preserved Hamiltonian.

\begin{prop}
Consider a connected Lie group $G$ acting 
 on the pre-$n$-plectic manifold $(M,\omega)$.
 Let $v$ be a $G$-invariant vector field on $M$ with $\cL_v\omega=0$. 
Suppose that $H^{n}_{dR}(M)=0=H^{n-1}_{dR}(M)$. 
 Then $v$ is the Hamiltonian vector field of some Hamiltonian form $H$ which is globally preserved by the action. 
\end{prop}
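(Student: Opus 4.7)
The plan is to mirror the structure of Proposition \ref{prop:vvh}, using the additional cohomological hypothesis $H^{n-1}_{dR}(M)=0$ in place of the averaging argument (which required compactness of $G$). The price one pays is that the resulting Hamiltonian form is only globally, rather than strictly, preserved.

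First, exactly as in the first half of the proof of Proposition \ref{prop:vvh}, the closedness of $\omega$ together with $\cL_v\omega=0$ shows that $\iota_v\omega$ is a closed $n$-form. The hypothesis $H^n_{dR}(M)=0$ then furnishes some $\tilde H\in\Omega^{n-1}(M)$ with $d\tilde H=-\iota_v\omega$, so that $\tilde H\in\ham{n-1}$ has $v$ as a Hamiltonian vector field. I would simply set $H:=\tilde H$; averaging is neither available nor necessary.

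It remains to verify that $\cL_{v_x}H$ is exact for every $x\in\g$. The action is multisymplectic (the blanket assumption of \S \ref{subsec:cons}), so $\cL_{v_x}\omega=0$; and since $v$ is $G$-invariant and $G$ is connected, differentiating the invariance condition at the identity yields $[v_x,v]=0$ for all $x\in\g$. Combining $d\circ\cL=\cL\circ d$ with the standard identity $\cL_X\circ\iota_Y=\iota_Y\circ\cL_X+\iota_{[X,Y]}$ (applied already in the proof of Lemma \ref{conserved-properties2}), one computes
\[
d(\cL_{v_x}H)=\cL_{v_x}(dH)=-\cL_{v_x}(\iota_v\omega)=-\iota_{[v_x,v]}\omega-\iota_v(\cL_{v_x}\omega)=0.
\]
Thus $\cL_{v_x}H$ is a closed $(n-1)$-form, and the second cohomological hypothesis $H^{n-1}_{dR}(M)=0$ immediately upgrades it to an exact form, which is exactly global $\g$-preservation of $H$.

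I do not anticipate any substantial obstacle: the proof is essentially forced by the two vanishings, the first producing the primitive $\tilde H$ and the second converting ``closed'' to ``exact'' for the obstruction $\cL_{v_x}H$. The only subtle point worth flagging is the equivalence, for connected $G$, between $G$-invariance of $v$ and the bracket condition $[v_x,v]=0$, which is standard but is the crucial input ensuring that $d(\cL_{v_x}H)$ vanishes.
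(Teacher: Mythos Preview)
Your proof is correct and follows essentially the same approach as the paper: obtain $H$ from $H^{n}_{dR}(M)=0$, then show $d(\cL_{v_x}H)=0$ via the $G$-invariance of $\iota_v\omega$, and conclude exactness from $H^{n-1}_{dR}(M)=0$. The only cosmetic difference is that the paper phrases the key step as ``$\iota_v\omega$ is $G$-invariant since $v$ and $\omega$ are,'' whereas you unpack this explicitly using $[v_x,v]=0$ and the commutator identity.
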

\begin{proof}
Since $\cL_v\omega=0$ and $H^{n}_{dR}(M)=0$ we have $\iota_v\omega=-d {H}$ 
for some (usually not $G$-invariant) $ {H}\in \ham{n-1}$. 
The form $\iota_v\omega$ is $G$-invariant, since $v$ and $\omega$ are. 
Hence for all $x\in \g$ we have $0=\cL_{v_x}dH=d(\cL_{v_x}H)$. The condition $H^{n-1}_{dR}(M)=0$ implies that 
$\cL_{v_x}H$ is exact.
\end{proof}

\subsection{Induced actions of isotropy subgroups}\label{subsec:isot}

Let $G$ act on a pre-$n$-plectic manifold $(M,\omega)$. In this whole subsection we fix $p\in Z_{k}(\g)\subset \Lambda^k \g$, for some $k\ge 1$ (see Definition \ref{def:cob}). We denote by $G_{p}$ the corresponding isotropy group for the adjoint action of $G$ on $\Lambda^{k}\g$, and by $\g_p$ its Lie algebra. Explicitly, $\g_p=\{x\in \g: [x,p]=0\}$.

\begin{remark}\label{lem:iso}
Let $p\in Z_{k}(\g)$ and $x\in \g$. From Lemma \ref{leibnizg} it follows that $x\wedge p\in Z_{k+1}(\g)$ if{f}
$x\in \g_p$.
\end{remark}

\begin{lemma}\label{lem:closedinv}
The form $\iota(v_p)\omega \in \Omega^{n+1-k}(M)$ is closed, and invariant under
the action of $G_p^0$, the connected component of the identity in $G_p$.
\end{lemma}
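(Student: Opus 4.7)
The plan is to prove the two assertions separately, handling closedness via the multi-vector calculus identity from Lemma \ref{tech_lemma}, and invariance via the Lie derivative / Schouten bracket commutator. By linearity in $p$, it suffices to treat a decomposable cycle $p = x_1 \wedge \dots \wedge x_k \in Z_k(\mathfrak{g})$, so that $v_p = v_{x_1} \wedge \dots \wedge v_{x_k}$.

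For closedness, I would apply Lemma \ref{tech_lemma} with $\Omega = \omega$ and the vector fields $v_{x_1}, \dots, v_{x_k}$. Each of the three terms on the right-hand side vanishes: the term $\iota(v_p)\, d\omega$ is zero because $\omega$ is closed; the middle sum $\sum_i (-1)^i \iota(v_{x_1} \wedge \cdots \wedge \hat{v}_{x_i} \wedge \cdots \wedge v_{x_k}) \mathcal{L}_{v_{x_i}}\omega$ vanishes because the action is multisymplectic, so $\mathcal{L}_{v_{x_i}}\omega = 0$ for all $i$; and the term $\iota(\partial(v_p))\omega$ vanishes because the assignment $\mathfrak{g} \to \mathfrak{X}(M)$ being a Lie algebra homomorphism forces $\partial(v_p) = v_{\partial p}$ (the Schouten differential on decomposable multi-vector fields agreeing with the image of the Chevalley-Eilenberg differential), and $\partial p = 0$ by hypothesis. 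Hence $(-1)^k d\iota(v_p)\omega = 0$.

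For $G_p^0$-invariance, since $G_p^0$ is connected it suffices to verify infinitesimal invariance: $\mathcal{L}_{v_y}(\iota(v_p)\omega) = 0$ for every $y \in \mathfrak{g}_p$. The graded commutator identity $\mathcal{L}_X \circ \iota_Y = \iota_Y \circ \mathcal{L}_X + \iota_{[X,Y]}$ (extended to multi-vector fields via the Schouten bracket) yields
\[
\mathcal{L}_{v_y}\bigl(\iota(v_p)\omega\bigr) = \iota(v_p)\,\mathcal{L}_{v_y}\omega + \iota([v_y, v_p])\,\omega.
\]
The first summand vanishes because the action is multisymplectic. For the second, the Leibniz rule for the Schouten bracket combined with $[v_y, v_{x_i}] = v_{[y,x_i]}$ gives $[v_y, v_p] = v_{[y,p]}$, where $[y, p]$ denotes the adjoint action of $y$ on $\Lambda^k \mathfrak{g}$. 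By definition of $\mathfrak{g}_p$, $[y, p] = 0$, so this term also vanishes.

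The only mildly delicate point is the identification $\partial(v_p) = v_{\partial p}$ and $[v_y, v_p] = v_{[y,p]}$, both of which are consequences of the fact that $v$ is a Lie algebra morphism together with the Leibniz rule for the relevant bracket on multi-vector fields; I do not anticipate a genuine obstacle beyond this verification.
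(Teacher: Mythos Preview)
Your proof is correct and follows essentially the same approach as the paper's: closedness via Lemma \ref{tech_lemma} applied to $\omega$, and invariance via the commutator identity $\cL_{v_y}\iota(v_p)\omega=\iota([v_y,v_p])\omega+\iota(v_p)\cL_{v_y}\omega$. One minor phrasing quibble: saying ``it suffices to treat a decomposable cycle'' is slightly off, since a general cycle need not decompose into decomposable cycles; what you really do (and what works) is apply the identity of Lemma \ref{tech_lemma} to each monomial summand of $p$ and then sum, using $\partial p=0$ only for the full $p$ via $\partial(v_p)=v_{\partial p}$.
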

\begin{proof}
The equality $d(\iota(v_p)\omega)=0$ follows upon applying Lemma \ref{tech_lemma}.
The invariance holds since for every $y\in \g_p$ we have
$\cL_{v_y}\iota(v_p)\omega=\iota({[v_y,v_p]})\omega+\iota(v_p)\cL_{v_y}\omega=0$, where the bracket is defined analogously to Lemma \ref{leibnizg} later on.
\end{proof}

Now assume there is a co-momentum map $(f) \colon \g \to L_{\infty}(M,\omega)$.

\begin{prop}\label{prop:indmomap}
A co-momentum map for the action of $G_p^0$ on $(M,\iota(v_p)\omega)$ is given by
$(f^p) \colon \g_p \to L_{\infty}(M,\iota(v_p)\omega)$ with components ($j=1,\dots,n-k)$:
\begin{align*}
f^p_j \colon \Lambda^j\g_p &\to\Omega^{n-k-j}(M),\\ q&\mapsto -\vs(k)f_{j+k}(q\wedge p).
\end{align*}
Furthermore, if the co-momentum map $(f)$ is $G$-equivariant, then $(f^p)$ is $G_p^0$-equivariant.
\end{prop}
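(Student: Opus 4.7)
The plan is to substitute the defining formula $f^p_j(q) = -\vs(k)f_{j+k}(q\wedge p)$ into the $L_\infty$-morphism equations for $(f^p)$ and reduce them to the morphism equations of $(f)$ evaluated on $q\wedge p\in \Lambda^{j+k}\g$. Concretely, for $q\in \Lambda^j\g_p$ the identity to verify,
$$-f^p_{j-1}(\partial q) = df^p_j(q) + \vs(j)\iota(v_q)(\iota(v_p)\omega),$$
expands, up to the common factor $-\vs(k)$, into expressions involving $f_{j+k-1}((\partial q)\wedge p)$, $df_{j+k}(q\wedge p)$ and $\iota(v_{q\wedge p})\omega$.

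The key algebraic input is the identity $\partial(q\wedge p) = (\partial q)\wedge p$, valid whenever $q\in \Lambda^j\g_p$ and $p\in Z_k(\g)$. I would establish it by expanding $\partial(q\wedge p)$ via Definition \ref{def:differential} and splitting the double sum according to whether the two bracketed indices both lie in $q$, both lie in $p$, or one in each: the purely-in-$p$ contributions vanish since $\partial p = 0$; the mixed contributions vanish because the derivation extension of $ad(x)$ to $\Lambda^\bullet\g$ annihilates $p$ for each generator $x$ of $q$ (as $x\in \g_p$ means $[x,p]=0$); and the remaining contributions reassemble into $(\partial q)\wedge p$. With this identity, the morphism equation for $(f)$ at level $j+k$ becomes
$$df_{j+k}(q\wedge p) = -f_{j+k-1}((\partial q)\wedge p) - \vs(j+k)\iota(v_{q\wedge p})\omega.$$

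To complete the verification I would dispose of the contraction terms using the paper's convention $\iota(v_1\wedge\cdots\wedge v_m) = \iota_{v_m}\circ\cdots\circ\iota_{v_1}$, which gives $\iota(v_{q\wedge p})\omega = \iota(v_p)\iota(v_q)\omega$ and $\iota(v_q)\iota(v_p)\omega = (-1)^{jk}\iota(v_{q\wedge p})\omega$. After substitution, the claim collapses to the numerical identity $\vs(k)\vs(j+k) + (-1)^{jk}\vs(j) = 0$, which is an immediate computation from $\vs(m) = -(-1)^{m(m+1)/2}$. The base case $j=1$ simultaneously shows that $v_x$ is a Hamiltonian vector field for $f^p_1(x)$ with respect to $\iota(v_p)\omega$, and is handled by the same argument using that $x\wedge p\in Z_{k+1}(\g)$ by Remark \ref{lem:iso}, so that $\partial(x\wedge p)=0$ and only the contraction term survives in the morphism equation for $(f)$.

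Finally, for the equivariance claim, connectedness of $G_p^0$ reduces it to its infinitesimal form $\cL_{v_x}(f^p_j(q)) = f^p_j([x,q])$ for $x\in \g_p$ and $q\in \Lambda^j\g_p$. Applying $G$-equivariance of $(f)$ yields $\cL_{v_x}(f^p_j(q)) = -\vs(k)f_{j+k}([x, q\wedge p])$; the derivation property of $ad(x)$ on $\Lambda^\bullet\g$ together with $[x,p]=0$ then gives $[x, q\wedge p] = [x,q]\wedge p$, whence the claim. The principal obstacle throughout is careful sign bookkeeping, in particular verifying the three-fold cancellation establishing $\partial(q\wedge p) = (\partial q)\wedge p$ and the final $\vs$-identity; once these are in hand, the rest of the argument is mechanical.
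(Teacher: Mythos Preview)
Your proposal is correct and follows essentially the same route as the paper: substitute the definition of $f^p_j$ into the co-momentum equations, reduce to the level-$(j+k)$ equation for $(f)$ via the identity $\partial(q\wedge p)=(\partial q)\wedge p$, and finish with a $\vs$-sign identity; the equivariance argument is identical. The only cosmetic differences are that the paper cites Lemma~\ref{leibnizg} for the boundary identity (you prove it by hand) and organizes the contraction as $\iota(v_q)\iota(v_p)\omega=\iota(v_{p\wedge q})\omega$, phrasing the sign identity as $\vs(k+j)=\vs(k)\vs(j)(-1)^{kj+1}$, which is equivalent to your $\vs(k)\vs(j+k)+(-1)^{jk}\vs(j)=0$.
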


\begin{proof} 
We first show that $(f^p)$ is a co-momentum map.
Let $q\in \Lambda^j\g_p$. We have
$$\vs(k+j)\iota_{v_q}(\iota(v_p)\omega)=\vs(k+j) \iota(v_{p\wedge q})\omega= -f_{k+j-1}(\partial(p\wedge q))-d(f_{k+j}(p\wedge q))$$
using in the second equality that $(f)$ is a co-momentum map (see eq. \eqref{main}
). 
From Lemma \ref{leibnizg} we obtain $\partial(p\wedge q)= (-1)^k p\wedge \partial(q)=(-1)^{kj} \partial(q)\wedge p$
since $p\in Z_{k}(\g)$ and $q\in \wedge^j\g_p$. Using $\vs(k+j)=\vs(k)
\vs(j)(-1)^{kj+1}$ we hence obtain
$$\vs(j)\iota_{v_q}(\iota(v_p)\omega)=-f^p_{j-1}(\partial(q))-df^p_j(q).$$
For the equivariance statement, notice that for all $y\in \g_p$:
$$\cL_{v_y}(f^p_j(q))=\cL_{v_y}(f_{k+j}(p\wedge q))=f_{k+j}({[y,p\wedge q]})=f_{k+j}({p\wedge [y,q]})=f^p_j([y,q]),$$
where we used the equivariance of $(f)$ in the second equality.
\end{proof}

\begin{remark}
The existence of a co-momentum map $(f)$ implies that $\iota(v_p)\omega$ 
is exact with primitive $-\vs(k)f_k(p)$, 
 by Equation \eqref{main}.
Assume further that $f_k$ is $G$-equivariant. Then this primitive is $G_p^0$-invariant, for $\cL_{v_y} f_k(p)=f_k([y,p])=0$ for all $y\in \g_p$. Hence, by \cite[Lemma 8.1]{FRZ}, an equivariant co-momentum map
for the action of $G_p^0$ on $(M,\iota(v_p)\omega)$ is given by ($j=1,\dots,n-k)$:

\begin{align*}
\Lambda^j\g_p &\to\Omega^{n-k-j}(M),\\ q&\mapsto (-1)^{k}\iota(v_q)(f_{k}(p)).
\end{align*}

Notice that this co-momentum map may differ from the one given in Proposition \ref{prop:indmomap}.
\end{remark}

Finally, we consider Hamiltonian forms.

\begin{prop}\label{ipHinv}
Let $H\in \ham{n-1}$ be $G$-invariant, then $\iota(v_p)H$ is $G_p^0$-invariant and it is a Hamiltonian form with respect to $\iota(v_p)\omega$ with Hamiltonian vector field $v_H$.
\end{prop}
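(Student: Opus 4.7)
The plan is to verify the two claims in turn, both being short consequences of previously established identities.

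For the $G_p^0$-invariance of $\iota(v_p)H$, I would compute $\cL_{v_y}(\iota(v_p)H)$ for arbitrary $y\in\g_p$ using the graded commutator $\cL_{v_y}\circ \iota(v_p) = \iota(v_p)\circ \cL_{v_y} + \iota([v_y,v_p])$, where the bracket of a vector field with a multi-vector field is the one from Lemma \ref{leibnizg}. Because $x\mapsto v_x$ is a Lie algebra morphism, $[v_y,v_p] = v_{[y,p]}$, and the defining condition $y\in \g_p$ (namely $[y,p]=0$) kills this term. The other term vanishes since $H$ is $G$-invariant, hence strictly $\g$-preserved, so $\cL_{v_y}H=0$. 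Thus $\cL_{v_y}(\iota(v_p)H) =0$, which by connectedness of $G_p^0$ yields the invariance.

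For the Hamiltonian form claim, I would apply Lemma \ref{tech_lemma} to $\Omega = H$. Since $H$ is strictly $\g$-preserved, every summand of the form $\iota(\cdots)\cL_{v_{x_i}}H$ vanishes, and $p\in Z_k(\g)$ kills the term $\iota(\partial v_p)H$. What remains is the identity
\[
(-1)^k\, d(\iota(v_p)H) = \iota(v_p)\, dH.
\]
Substituting $dH = -\iota_{v_H}\omega$ and commuting $\iota_{v_H}$ past the $k$-fold contraction $\iota(v_p)=\iota_{v_{x_k}}\cdots\iota_{v_{x_1}}$ at the cost of $(-1)^k$, the two signs cancel, yielding
\[
d(\iota(v_p)H) = -\iota_{v_H}\bigl(\iota(v_p)\omega\bigr),
\]
which is exactly the Hamiltonian condition for $\iota(v_p)H$ with respect to the closed form $\iota(v_p)\omega$ (closedness being Lemma \ref{lem:closedinv}), with Hamiltonian vector field $v_H$.

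There is no genuine obstacle here; both parts are bookkeeping. The only care needed is in the sign arithmetic when moving $\iota_{v_H}$ through the multi-contraction, and in correctly identifying $[v_y,v_p]$ via the Leibniz rule of Lemma \ref{leibnizg} so that the hypothesis $y\in\g_p$ can be used.
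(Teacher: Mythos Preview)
Your proof is correct and follows essentially the same route as the paper: the invariance is handled via the commutator identity $\cL_{v_y}\circ\iota(v_p)=\iota(v_p)\circ\cL_{v_y}+\iota(v_{[y,p]})$ (exactly as in Lemma~\ref{lem:closedinv}), and the Hamiltonian condition via Lemma~\ref{tech_lemma} applied to $H$ together with $dH=-\iota_{v_H}\omega$. Your sign tracking is in fact more explicit than the paper's one-line computation.
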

\begin{proof}
The $G_p^0$-invariance of $\iota(v_p)H$ is shown exactly as in Lemma \ref{lem:closedinv}. For the second statement, using Lemma \ref{tech_lemma} we compute $d(\iota(v_p)H)=(-1)^k\iota(v_p)dH=\iota_{X_H}(\iota(v_p)\omega).$ 
\end{proof}

\begin{remark}
Consider the case $k=n-1$. Then $\iota(v_p)\omega$ is a 2-form, and from Proposition \ref{ipHinv} we recover the fact that $\iota(v_p)H$ is a conserved quantity (a special case of Proposition \ref{prop:Hcons} later on).
\end{remark}

\subsection{Co-momentum maps for $\g\oplus \RR$} \label{g-plus-r}

We extend the results of \S \ref{sec:strict} under a non-degeneracy assumption for $\omega$. 
We assume that $(M,\omega)$ is an $n$-plectic manifold, $H\in\ham{n-1}$ and that $(f):\mathfrak g\to \li(M,\omega)$ is a co-momentum for a strictly $H$-preserving infinitesimal action $\mathfrak g\to \mathfrak X(M,\omega),x\mapsto v_x$.

By Lemma \ref{conserved-properties2} (i) the generators of the action commute with the Hamiltonian vector field $v_H$ of $H$, so the infinitesimal $\g$-action on $M$ extends to an action of the direct sum Lie algebra $\tilde{\g}:=\g\oplus \langle c \rangle_\RR$, by means of $c \mapsto v_H$. Notice that $\Lambda^k\tilde{\g}=\Lambda^k\g\oplus (\Lambda^{k-1}\g\otimes \langle c \rangle_\RR)$.

In the sequel we will make use of the following Lemma several times. Recall that the differential $\partial$ was defined in Def. \ref{def:differential}.
\begin{lemma} \label{leibnizg}
Let $p\in \Lambda^k\g$ and $q\in\Lambda^l\g$. Then
$$\partial(p\wedge q)=\partial(p)\wedge q+ (-1)^k p\wedge \partial(q)+(-1)^k[p,q],$$
where $[x_1\wedge...\wedge x_k,y_1\wedge ...\wedge y_l]=\sum (-1)^{i+j}[x_i,y_j]\wedge x_1 \wedge ...\wedge \hat x_i\wedge ...\wedge x_k \wedge y_1 \wedge ...\wedge \hat y_j\wedge ...\wedge y_l$. 
\end{lemma}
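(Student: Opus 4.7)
By bilinearity of $\partial$, of $\wedge$, and of the bracket $[\,,\,]$, it suffices to verify the identity when $p=x_1\wedge\cdots\wedge x_k$ and $q=y_1\wedge\cdots\wedge y_l$ are decomposable. Writing $z_1,\dots,z_{k+l}$ for the concatenated list (so $z_m=x_m$ for $m\le k$ and $z_{k+m}=y_m$), the plan is to apply the definition of $\partial_{k+l}$ to $p\wedge q=z_1\wedge\cdots\wedge z_{k+l}$ and split the resulting sum of pairs $(a,b)$ with $1\le a<b\le k+l$ into three disjoint blocks: both indices lie in $\{1,\dots,k\}$, both lie in $\{k+1,\dots,k+l\}$, or one lies in each. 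Each block will be matched with one of the three terms on the right-hand side.

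For the $xx$-block, the pair $(a,b)=(i,j)$ with $i<j\le k$ contributes $(-1)^{i+j}[x_i,x_j]\wedge x_1\wedge\cdots\widehat{x_i}\cdots\widehat{x_j}\cdots\wedge x_k\wedge q$, and summing reproduces $\partial(p)\wedge q$ directly. For the $yy$-block, a pair $(k+i,k+j)$ with $i<j\le l$ carries the sign $(-1)^{(k+i)+(k+j)}=(-1)^{i+j}$, and after placing $[y_i,y_j]$ at the front the factor $p$ sits between the bracket and the remaining $y$'s; moving the bracket past $p$ produces a sign $(-1)^k$, yielding exactly $(-1)^k p\wedge\partial(q)$.

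The mixed block is where the bracket term arises. A pair $(i,k+j)$ with $1\le i\le k$ and $1\le j\le l$ contributes $(-1)^{i+(k+j)}[x_i,y_j]\wedge x_1\wedge\cdots\widehat{x_i}\cdots\wedge x_k\wedge y_1\wedge\cdots\widehat{y_j}\cdots\wedge y_l$; factoring out $(-1)^k$ and comparing with the definition of $[p,q]$ given in the statement recovers $(-1)^k[p,q]$. Adding the three contributions gives the claimed formula.

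The only delicate point is the bookkeeping of signs, in particular the $(-1)^k$ coming from commuting the newly created bracket past $p$ in the $yy$-block and the identification of the mixed block with the stated bracket on $\Lambda^\bullet\g$; once the pairs are partitioned as above, everything else is mechanical.
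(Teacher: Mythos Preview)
Your proof is correct and follows exactly the same approach as the paper's own proof: reduce to decomposable $p$ and $q$, expand $\partial(p\wedge q)$ via the definition, and split the double sum into the three blocks $i<j\le k$, $k<i<j$, and $i\le k<j$. The paper states this in one sentence without carrying out the sign checks, so your version is simply a more detailed execution of the same argument.
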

\begin{proof}
It is sufficient to prove the assertion for monomials
$p= x_1\wedge\ldots\wedge x_k$ and $q=x_{k+1}\wedge\ldots\wedge x_{k+l}$.
In that case $\partial(p\wedge q)$ is given by a 
sum over indices $i,j$ with ${1 \leq i < j \leq k+l}$. Splitting it into sums over ${ i < j \leq k}$, ${k< i < j }$ and $ i \leq k < j$ proves the assertion.
\end{proof}

\begin{remark}
The bracket $[\cdot,\cdot]:\Lambda^\bullet\mathfrak g\times \Lambda^\bullet\mathfrak g\to \Lambda^\bullet\mathfrak g$ defined above turns $\Lambda^\bullet\mathfrak g$ into a Gerstenhaber algebra.
\end{remark}

\begin{lemma}\label{lem:extmomap}
There is a canonical extension of $(f)$ to a co-momentum map $(\tilde{f})$ for the $\tilde{\g}$-action, determined by 
\begin{equation}\label{eq:ext} 
\tilde{f}_k(x_1,\dots,x_{k-1}, c)= \vs(k)
\iota(v_{x_1}\wedge\dots \wedge v_{x_{k-1}})H
\end{equation} 
for all $k\ge 1$ and $x_1,\dots,x_{k-1}\in \g$.
\end{lemma}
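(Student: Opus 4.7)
The plan is to define $\tilde f_k$ on the decomposition $\Lambda^k\tilde\g = \Lambda^k\g \oplus (\Lambda^{k-1}\g \wedge c)$ by setting $\tilde f_k$ equal to $f_k$ on the first summand and equal to the prescribed expression \eqref{eq:ext} on the second, and then to verify that the $L_\infty$-morphism equation \eqref{main} holds for every $k \geq 1$ on each summand separately. The base case $k=1$ is immediate: $\tilde f_1(c) = \vs(1)H = H$, and so $d\tilde f_1(c) = dH = -\iota_{v_H}\omega = -\iota_{v_c}\omega$, which is exactly \eqref{eq:mom} applied to $c$.

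On pure elements of $\Lambda^k\g \subset \Lambda^k\tilde\g$ there is nothing new to prove, since the differential $\partial$ preserves this subspace (it uses only brackets internal to $\g$) and $(f)$ is already a co-momentum map for the $\g$-action.

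The real content is checking \eqref{main} on elements of the form $p\wedge c$ with $p = x_1\wedge\cdots\wedge x_{k-1}\in \Lambda^{k-1}\g$. First I would compute $\partial(p\wedge c)$ via Lemma \ref{leibnizg}: since $\partial c = 0$ and $[x,c] = 0$ for all $x\in \g$ (because $\tilde\g$ is a direct sum of Lie algebras), only the first term in the Leibniz formula survives, yielding $\partial(p\wedge c) = \partial(p)\wedge c$. Hence the left-hand side of \eqref{main} becomes $-\vs(k-1)\iota(v_{\partial p})H$. For the right-hand side, I would apply Lemma \ref{tech_lemma} to the form $H$: the sum involving $\cL_{v_{x_i}}H$ drops out thanks to strict $H$-preservation, leaving $(-1)^{k-1}d\iota(v_p)H = \iota(v_{\partial p})H + \iota(v_p)dH$. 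Substituting $dH = -\iota_{v_H}\omega$ and commuting $\iota_{v_H}$ past the $k-1$ factors of $\iota(v_p)$ produces a sign $(-1)^{k-1}$ and rewrites $\iota(v_p)dH$ as a multiple of $\iota(v_{p\wedge c})\omega$. After multiplying by $\vs(k)$, the $\omega$-terms on the two sides of \eqref{main} cancel, and what remains is a scalar multiple of $\iota(v_{\partial p})H$ on each side.

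The only real obstacle is sign bookkeeping. Carrying the computation through, matching the two sides reduces to the identity $\vs(k)(-1)^{k-1} = -\vs(k-1)$, which is a short parity check using $\vs(k) = -(-1)^{k(k+1)/2}$: the exponents $k(k+1)/2 + k - 1$ and $(k-1)k/2$ differ by $2k-1$, which is odd, producing precisely the required sign flip. This completes the verification on $\Lambda^{k-1}\g \wedge c$ and hence on all of $\Lambda^k\tilde\g$.
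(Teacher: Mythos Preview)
Your proof is correct and follows essentially the same route as the paper's: both compute $\partial(p\wedge c)=\partial(p)\wedge c$ via Lemma~\ref{leibnizg}, apply Lemma~\ref{tech_lemma} to $H$ using strict $H$-preservation, and reduce the verification of \eqref{main} to the sign identity $\vs(k)(-1)^{k-1}=-\vs(k-1)$ (equivalently $\vs(k)\vs(k-1)=(-1)^k$). Your presentation is a bit more explicit about the decomposition $\Lambda^k\tilde\g=\Lambda^k\g\oplus(\Lambda^{k-1}\g\wedge c)$ and the base case $k=1$, but there is no substantive difference.
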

\begin{proof} We have to check that Equation \eqref{main} is satisfied. Without loss of generality assume $p=x_1 \wedge \dots \wedge x_{k-1}\in \wedge^{k-1}\g$, and notice
that $[x_i,c]=0$ for all $i$ implies that $\partial (p\otimes c)=(\partial p)\otimes c$, by Lemma \ref{leibnizg}. 
Using the definition of $\tilde{f}_{k}$, Equation \eqref{main} applied to $p\otimes c$ reads 
$$
-\vs(k-1)\iota(v_{\partial p})H={\vs(k)}d\iota(v_p)H+
\vs(k)(-1)^{k-1}\iota(v_p)\iota_{v_H}\omega.
$$

Using $\iota_{v_H}\omega=-dH$ we see that this equation is satisfied by Lemma \ref{tech_lemma}, since $H$ is $\mathfrak g$-invariant and using the identity $\vs(k)\vs(k-1)=(-1)^k$.
\end{proof}

Notice that, even when $(f)$ is equivariant, $(\tilde{f})$ is not equivariant in general. For instance,
$\cL_{v_H}f_1(x)$ is usually different from $\tilde{f}_1([c,x])=\tilde{f}_1(0)=0$. Further, it can usually not be made equivariant by an averaging procedure since the group $G\times \RR$ integrating $\tilde{\g}$ is non-compact. 

If $(\tilde{f})$ is equivariant, one has strong consequences: $\cL_{v_H} f_k(x_1,\dots,x_{k})=0$, and
in particular $f_k(x_1,\dots,x_{k})$ is a strictly conserved quantity for all $x_1\wedge\dots\wedge x_{k}\in \wedge^k\g$.\\
 
 If we assume that $\cL_{v_H}H=0$, then the $\tilde{\g}$-action strictly preserves the Hamiltonian $H$, hence we can apply Proposition \ref{prop:conserved} to the $\tilde{\g}$-action and obtain globally conserved quantities for $v_H$ for all
 elements of $Z_k({\tilde{\g}})$. As the latter is isomorphic to $Z_k(\g)\oplus (Z_{k-1}(\g)\otimes \langle c\rangle_\RR)$,
these globally conserved quantities are those we already know from 
 Proposition \ref{prop:conserved}, plus
 those arising from $Z_{k-1}(\g)\otimes \langle c\rangle_\RR$. Somewhat surprisingly, it turns out that the latter are globally conserved quantities even without the assumption $\cL_{v_H}H=0$. {This fact is not predicted by Prop. \ref{prop:loc-conserved}, which only ensures the existence of locally conserved quantities.}

\begin{prop}\label{prop:Hcons} 
Assume that $(M,\omega)$ is an $n$-plectic manifold, $H\in\ham{n-1}$ and that $(f):\mathfrak g\to \li(M,\omega)$ is the co-momentum of a strictly $H$-preserving Lie algebra action. The $\tilde f_k(p\otimes c)$ as in eq. \eqref{eq:ext} is a globally conserved quantity, for all $p\in Z_{k-1}(\mathfrak g)$.
\end{prop}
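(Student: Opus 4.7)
We need to show that $\cL_{v_H}\bigl(\iota(v_p)H\bigr)$ is exact for every $p\in Z_{k-1}(\g)$; the $\vs(k)$ prefactor is irrelevant. The approach is a direct computation combining Lemma~\ref{tech_lemma}, Cartan's magic formula, and the defining relation $\iota_{v_H}\omega=-dH$; notably, no appeal to the $\tilde{\g}$-action or to $\cL_{v_H}H=0$ is needed.

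First, I apply Lemma~\ref{tech_lemma} to $\Omega:=H$ with the vector fields $v_{x_1},\dots,v_{x_{k-1}}$ coming from $p=x_1\wedge\dots\wedge x_{k-1}$. The hypothesis $\partial p=0$ annihilates the $\iota(\partial v_p)H$ term, while the strict $H$-preservation hypothesis $\cL_{v_{x_i}}H=0$ annihilates all the Lie-derivative summands. What remains is the identity
\[
(-1)^{k-1}d\iota(v_p)H=\iota(v_p)dH=-\iota(v_p)\iota_{v_H}\omega,
\]
where the last equality uses the Hamiltonian equation for $H$.

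Next, I compute $\cL_{v_H}\bigl(\iota(v_p)H\bigr)$ via Cartan's formula:
\[
\cL_{v_H}\bigl(\iota(v_p)H\bigr)=d\bigl(\iota_{v_H}\iota(v_p)H\bigr)+\iota_{v_H}d\iota(v_p)H.
\]
The second summand is, up to sign, $\iota_{v_H}\iota(v_p)\iota_{v_H}\omega$. Since $\iota(v_p)=\iota_{v_{x_{k-1}}}\cdots\iota_{v_{x_1}}$ consists of $k-1$ contractions, moving the outer $\iota_{v_H}$ past it produces a sign $(-1)^{k-1}$ and leaves $\iota(v_p)\iota_{v_H}\iota_{v_H}\omega$, which vanishes because $\iota_{v_H}\iota_{v_H}=0$ on any form. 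Hence
\[
\cL_{v_H}\bigl(\iota(v_p)H\bigr)=d\bigl(\iota_{v_H}\iota(v_p)H\bigr),
\]
proving exactness.

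The argument is essentially bookkeeping: the only conceptual point is recognising that the two obstructions to $\iota(v_p)H$ being strictly conserved, namely the failure of $\iota(v_p)dH$ to vanish and the failure of $\iota_{v_H}\iota(v_p)H$ to be closed, fit together into a single exact form because $\iota_{v_H}^2=0$. No obstacle of substance arises; the principal place to be careful is the sign tracking when moving $\iota_{v_H}$ through $\iota(v_p)$ and when applying Lemma~\ref{tech_lemma}. Note also that the $n$-plectic (rather than merely pre-$n$-plectic) hypothesis is used only to ensure the uniqueness of $v_H$, so the statement extends verbatim if one instead fixes a choice of Hamiltonian vector field.
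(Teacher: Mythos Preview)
Your proof is correct and follows essentially the same route as the paper: both show that $\iota_{v_H}d\bigl(\iota(v_p)H\bigr)=0$, the key point being that the surviving term contains $\iota_{v_H}$ applied twice to $\omega$ and hence vanishes. The only cosmetic difference is that the paper rewrites $d\tilde f_k(p\otimes c)$ via the co-momentum equation~\eqref{main} for $\tilde{\g}$ (invoking Lemma~\ref{lem:extmomap}), whereas you obtain the same expression for $d\iota(v_p)H$ directly from Lemma~\ref{tech_lemma}; since Lemma~\ref{lem:extmomap} itself rests on Lemma~\ref{tech_lemma}, the two arguments are equivalent.
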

\begin{proof}
Because of Cartan's formula it suffices to show that $\iota_{v_H} d \tilde f_k(p\otimes c)$ is exact. We will show that it actually vanishes. By Lemma \ref{lem:extmomap} and using eq. \eqref{main} we get
\[
\iota_{v_H} d \tilde f_k(p\otimes c)=\iota_{v_H} \left( -\tilde f_{k-1}(\partial(p\otimes c)) -\vs(k)\iota(v_{p\otimes c})\omega\right)
\]
Applying Lemma \ref{leibnizg} to the Lie algebra $\tilde{\g}$ we see that $\partial(p\otimes c)=0$. Further $v_{c}={v_H}$, so by the skew-symmetry of $\omega$ we get $\iota_{v_H} \iota(v_{p\otimes c})\omega=0$, which finishes the proof.
\end{proof}

\subsection{Multisymplectic analogue of magnetic terms}\label{subsec:magnetic}

In this subsection we explain how to generalize the well-known magnetic term from symplectic geometry to the multisymplectic situation (compare  \cite[\S 7]{CIdL99} for this construction) and provide the example announced in \S \ref{sec:strict}. 
\begin{constr}
Let $N$ be a manifold and $c$ a closed $k+1$-form on $N$. Denoting the canonical projection from $\Lambda^kT^*N\to N$ by $\pi$ and the canonical $k$-form on $\Lambda^kT^*N$ by $-\theta$, the $k+1$-form $\omega=d\theta+\pi^*c$ is always $k$-plectic i.e. non-degenerate and closed on $M=\Lambda^kT^*N$. The form $\pi^*c$ is called \emph{magnetic term}.
\end{constr}

\begin{prop}
Let $k\geq 1$  and $N$ be a manifold, $b\in \Omega^k(N)$  and $w$ a vector field on $N$, such that $\cL_wb=da$ for some $a \in \Omega^{k-1}(N)$ (i.e. $b$ is globally conserved by $w$). Denote the canonical lift of $w$ to $M=\Lambda^kT^*N$ by $w^h$. Then $w^h$ is a Hamiltonian vector field on $(M,\omega= d\theta+\pi^*db)$, with the following Hamiltonian $(k{-}1)$-form:
\begin{equation}\label{eq:H}
H=-\pi^*a+\iota_{w^h}(\theta+\pi^*b).
\end{equation}
\end{prop}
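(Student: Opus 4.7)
The statement asks us to verify the single equation $\iota_{w^h}\omega=-dH$ defining a Hamiltonian pair. The plan is to compute $-dH$ by brute force using Cartan's formula, and to recognize each piece as a contraction against either $d\theta$ or $\pi^*db$. The key inputs are two well-known properties of the canonical lift $w^h$: first, $w^h$ is $\pi$-related to $w$, so that $\cL_{w^h}\pi^*\beta=\pi^*\cL_w\beta$ for every $\beta\in\Omega^\bullet(N)$; and second, $w^h$ preserves the canonical Liouville-type $k$-form, i.e.\ $\cL_{w^h}\theta=0$. (The latter follows from the tautological definition of $\theta$ together with naturality of the lift, once one fixes conventions so that $-\theta$ is the canonical form.)

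\textbf{The computation.} Using $H=-\pi^*a+\iota_{w^h}(\theta+\pi^*b)$, I apply $-d$ and rewrite each exact-of-contraction term via Cartan's formula $d\iota_{w^h}=\cL_{w^h}-\iota_{w^h}d$:
\begin{align*}
-d\iota_{w^h}\theta &= -\cL_{w^h}\theta+\iota_{w^h}d\theta=\iota_{w^h}d\theta,\\
-d\iota_{w^h}\pi^*b &= -\cL_{w^h}\pi^*b+\iota_{w^h}d\pi^*b = -\pi^*\cL_w b+\iota_{w^h}\pi^*db.
\end{align*}
In the second line I used that $w^h$ is $\pi$-related to $w$. Substituting $\cL_w b=da$ from the hypothesis, the term $-\pi^*\cL_w b=-\pi^*da$ exactly cancels against $d\pi^*a$ coming from $-d(-\pi^*a)$, and what remains is
\[
-dH=\iota_{w^h}d\theta+\iota_{w^h}\pi^*db=\iota_{w^h}\omega,
\]
which is the desired identity.

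\textbf{Expected obstacles.} There is no conceptual difficulty; the entire proof is a one-line Cartan-formula calculation. The only care required is in the two canonical identities $\cL_{w^h}\theta=0$ and $\cL_{w^h}\pi^*\beta=\pi^*\cL_w\beta$. The first should be justified (or referenced to, say, \cite{CIdL99}) by noting that the canonical $k$-form on $\Lambda^kT^*N$ is tautological and therefore equivariant under all diffeomorphism lifts, so its Lie derivative along the lift of any vector field vanishes; the sign convention $-\theta$ in the construction merely flips an overall sign and does not affect invariance. The second is the standard naturality property of pull-back along $\pi$-related vector fields. With these two identities in hand, the verification is immediate and no further case analysis (e.g.\ on $k$) is needed.
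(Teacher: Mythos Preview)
Your proof is correct and follows essentially the same approach as the paper: both arguments compute $dH$ (or $-dH$) via Cartan's formula, invoke $\cL_{w^h}\theta=0$ and the $\pi$-relatedness of $w^h$ and $w$, and cancel $\pi^*da$ against $\pi^*\cL_w b$. Your write-up is slightly more streamlined in that you apply Cartan directly to $d\iota_{w^h}\pi^*b$, whereas the paper first rewrites $\iota_{w^h}\pi^*b=\pi^*\iota_w b$ and then applies Cartan on $N$, but the two computations are the same in substance.
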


\begin{proof}
Upon observing $\iota_{w^h}(\pi^*b)=\pi^*(\iota_wb)$ and consequently $\cL_{w^h}(\pi^*b)=\pi^*(\cL_wb)$, we have:
\begin{align*}
dH&=-\pi^*\cL_wb+d(\iota_{w^h}\theta)+\pi^*(d\iota_wb)\\
&=-\pi^*da-\iota_{w^h}d\theta+\cL_{w^h}\theta-\pi^*(\iota_wdb)+\pi^*\cL_wb\\
&=-\pi^*da-\iota_{w^h}d\theta-\pi^*(\iota_wdb)+\pi^*da=-\iota_{w^h}(d\theta+\pi^*db)\\
&=-\iota_{w^h}\omega,
\end{align*}
where in the third equality we used $\cL_{w^h}\theta=0$, as in the symplectic case.
\end{proof}

\begin{remark}\label{rem:comom}
If $N\times G\to N$ is a right action and $b$ a $G$-invariant $k$-form on $N$, then the 
 $k$-plectic form
 $\omega=d(\theta+\pi^*b)$ on $M=\Lambda^kT^*N$ has a $G$-invariant potential. This assures the existence of a co-momentum map (see \cite[\S 8.1]{FRZ}), whose  first component 
$f_1\colon \g\to \ham{k-1}$ is given by $f_1(x)=\iota_{v_x^h}(\theta+\pi^*b)$.
\end{remark}

In \S \ref{sec:strict} we announced an example of a strictly $H$-preserving action on a multisymplectic manifold admitting a co-momentum $(f)$ such that, for some boundary $x\in B_1(\g)$, $f_1(x)$ is a globally conserved quantity that is \emph{not strictly conserved}. We now provide this example.
\begin{ep} \label{magnetic:ep}  
Let $N\times G\to N$ be a right action and assume that, in the set-up of the preceding proposition, the vector field $w$ and the forms $a$ and $b$ be $G$-invariant. Then $H$ (see eq. \eqref{eq:H}) is invariant under the induced $G$-action on $M=\Lambda^kT^*N$. 

Assuming furthermore $db=0$, we can choose $a:=\iota_wb$. Specialize to $k=2$ and $N=G$ with the action by 
$N \times G \to N, (n,g)\mapsto g^{-1}{ \cdot} n$.  Thus we have: 

\begin{itemize}
\item for $x\in \mathfrak{g}=T_eG$ and $g\in G$ is $v_x(g)=-(r_g)_*(x)$, where $r_g(h)=h{\cdot} g$ for $h\in G$. In particular,  the generators $v_x$ of the action are right-invariant vector fields on 
$G$.
\item $w$ is  a left-invariant vector field, i.e. it exists $\tilde{w}\in \g$ such that for $g\in G$, $w(g)=(l_g)_*(\tilde{w})$,
where $l_g(h)=g{\cdot} h$ for $h\in G$,
\item $b$  is a closed left-invariant $2$-form, i.e. it exists a $\tilde{b}\in \Lambda^2\g^*$ which is closed under the Chevalley-Eilenberg differential (the dual of the Lie algebra homology differential) and 
$b(g)=(l_{g^{-1}})^*(\tilde{b}) =((l_{g^{-1}})_*)^*(\tilde{b})$ for all $g \in G$.
\end{itemize}

Denote by $(f)$ the co-momentum map recalled in Remark \ref{rem:comom}. For any $x\in \g$ we compute
$$\cL_{w^h}f_1(x)=\iota_{v_x^h}\pi^*(\cL_wb)=-\pi^*(d(\iota_{v_x}a)),$$
so $f_1(x)$ being a strictly conserved quantity is equivalent to
$\iota_{v_x}a$ being a constant function on $N$.
Evaluating the function $\iota_{v_x}a=b(w,v_x)$ at $g\in N=G$ one obtains 
\begin{equation}\label{eq:fct}
-\tilde{b}(\tilde{w},Ad_{g^{-1}}(x)).
\end{equation}

It is clear that the function \eqref{eq:fct} is not constant in general. For instance, take $G=SL_2(\RR)$. A basis for $\g:=\mathfrak{sl}_2(\RR)$ is
\[ \begin{array}{ccc}
h=\left( \begin{array}{cc} 1 & 0 \\ 0 & -1 \end{array} \right), &
e=\left( \begin{array}{cc} 0 & 1 \\ 0 & 0 \end{array} \right), &
f=\left( \begin{array}{cc} 0 & 0 \\ 1 & 0 \end{array} \right),
\end{array} \]
and $[h,e]=2e, [h,f]=-2f, [e,f]=h$. So notably all elements of $\g$ are boundaries. The form $\tilde{b}:=e^*\wedge f^*\in \Lambda^2\g^*$ is closed (actually exact) with respect to the Chevalley-Eilenberg differential. Taking $\tilde{w}:=f$ and $x:=h\in \g=B_1(\g)$ one computes that the function \eqref{eq:fct} attains the value $2 \beta \delta$ at 
$g=\left( \begin{array}{cc} \alpha & \beta \\ \gamma & \delta \end{array} \right)\in G$, hence it is not a constant function on $G$. 
We conclude that for this choice of $x \in B_1(\g)$, the form  $f_1(x)$ is  not strictly conserved.
\end{ep}

\section{Applications of conserved quantities}\label{sec:applications}

In \S \ref{sec:conshomo} we saw that many conserved quantities exist on pre-$n$-plectic manifolds endowed with a co-momentum map.
In this section we show some geometric consequences of the existence of conserved quantities on a manifold $M$, by looking at maps from a compact oriented manifold $\Sigma$ into $M$. In most of our statements $M$ does not need any additional geometric structure (but we specialize to the pre-$n$-plectic case e.g. in Proposition \ref{prop:transcons}). 
In \S \ref{sec:general} we consider conserved quantities whose degree, as differential forms on $M$, equals $\dim(\Sigma)$. In \S \ref{sec:transgr} we extend some of the results to arbitrary degrees.

\subsection{A general Kelvin's circulation theorem}\label{sec:general}

Let $M$ be a manifold and $v\in\mathfrak X(M)$ a vector field. Let $\Sigma$ be a compact, oriented $d$-dimensional manifold, and $\sigma_0\colon \Sigma\to M$ a smooth map. We view $\Sigma$ as a ``membrane'' in $M$, which evolves under the flow of the vector field, and want to find quantities which are unchanged under the evolution. 
The following proposition can be viewed as a general version of Kelvin's circulation theorem, as we explain in Remark \ref{rem:Kelvin} below.

\begin{prop}\label{prop:unchanged}
Let $\Sigma$ be a compact, oriented $d$-dimensional manifold (possibly with boundary), $v$ a vector field on $M$ with flow $\phi_t$,
and $\sigma_0\colon \Sigma\to M$ a smooth map. Consider $\sigma_t:=\phi_t\circ \sigma_0\colon \Sigma\to M$.
If $\alpha\in \Omega^d(M)$ is a differential form, 
then the number
$$\int_{\Sigma}(\sigma_t)^*\alpha$$ is independent of the time parameter $t$ if one of the following conditions holds:
\begin{compactenum}[(i)]
	\item $\alpha$ is strictly conserved by $v$,
	\item $\alpha$ is globally conserved by $v$ and $\Sigma$ has no boundary,
	\item $\alpha$ is locally conserved by $v$ and there exists a compact, oriented manifold with boundary $N$ such that $\Sigma=\partial N$ and a map $\tilde \sigma_0:N\to M$ with $\tilde\sigma_0|_{\partial N}=\sigma_0$.
\end{compactenum}

\end{prop}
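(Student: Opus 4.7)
The plan is to reduce all three cases to the single identity
\[
\frac{d}{dt}\int_{\Sigma}\sigma_t^*\alpha=\int_{\Sigma}\sigma_t^*\cL_v\alpha.
\]
This identity follows from the standard fact that $\frac{d}{dt}\phi_t^*\alpha=\phi_t^*\cL_v\alpha$, combined with $\sigma_t^*=\sigma_0^*\circ\phi_t^*$ (so $\sigma_0^*\phi_t^*\cL_v\alpha=\sigma_t^*\cL_v\alpha$) and the exchange of $\frac{d}{dt}$ with $\int_\Sigma$, which is legitimate since $\Sigma$ is compact and the flow is smooth. Once this is in place, the task reduces to showing that the right-hand side vanishes under each of the three hypotheses.

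In case (i), $\cL_v\alpha=0$, so the integrand vanishes pointwise and the conclusion is immediate. In case (ii), write $\cL_v\alpha=d\beta$ for some $\beta\in\Omega^{d-1}(M)$; pulling back, $\sigma_t^*\cL_v\alpha=d(\sigma_t^*\beta)$, and since $\Sigma$ is closed, Stokes' theorem yields $\int_\Sigma d(\sigma_t^*\beta)=0$. In case (iii), the trick is to extend the pulled-back form to a filling: define $\tilde\sigma_t:=\phi_t\circ\tilde\sigma_0\colon N\to M$, noting that $\tilde\sigma_t|_{\partial N}=\phi_t\circ\sigma_0=\sigma_t$. Then
\[
\int_\Sigma\sigma_t^*\cL_v\alpha=\int_{\partial N}\tilde\sigma_t^*\cL_v\alpha=\int_N d\bigl(\tilde\sigma_t^*\cL_v\alpha\bigr)=\int_N\tilde\sigma_t^*d(\cL_v\alpha)=0,
\]
where Stokes' theorem is used in the second equality and closedness of $\cL_v\alpha$ in the last.

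There is essentially no obstacle here; the only point that requires minimal care is the differentiation-under-the-integral-sign step, which is standard for compactly supported integration of a smoothly $t$-dependent form, and the unified rewriting $\frac{d}{dt}\sigma_t^*\alpha=\sigma_t^*\cL_v\alpha$ which makes each of the three cases a one-line application of Stokes' theorem (or triviality in case (i)).
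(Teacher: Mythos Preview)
Your proof is correct and follows essentially the same approach as the paper: establish the identity $\frac{d}{dt}\int_\Sigma\sigma_t^*\alpha=\int_\Sigma\sigma_t^*\cL_v\alpha$ via $\frac{d}{dt}\phi_t^*\alpha=\phi_t^*\cL_v\alpha$ and compactness, then dispatch each case by Stokes' theorem. The paper in fact only writes out case~(ii) explicitly and declares the others analogous, so your treatment of (i) and (iii) is more detailed than the original.
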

\begin{remark}
Since $\Sigma$ is compact, there exists an $\varepsilon=\varepsilon(\sigma_0)>0$ such that $\phi_t$ is defined at least on $(-\varepsilon,\varepsilon)\times\sigma_0(\Sigma)\subset \mathbb R\times M$. Obviously in (i) and (ii) we can consider $|t|<\varepsilon=\varepsilon(\sigma_0)$. Mutatis mutandis we consider only $|t|<\varepsilon(\tilde \sigma_0)$ in case (iii). Notice that if $dim(M)=d$, or more generally
if $\alpha$ is closed, then $\alpha$ is globally conserved.
\end{remark}

\begin{proof}
We only prove that condition (ii) suffices, for the other implications follow analogously. The diffeomorphisms $\phi_t$ satisfy $\frac{d}{dt}(\phi_t^*\alpha)=\phi_t^*(\cL_{v}\alpha)$.
Precomposing with the pullback $(\sigma_0)^*$ we obtain
\begin{equation}
\label{eq:ananondiff}
\frac{d}{dt}(\sigma_t^*\alpha)=\sigma_t^*(\cL_{v}\alpha).
\end{equation}
Hence by compactness of $\Sigma$,
\begin{equation*}
\frac{d}{dt}\int_{\Sigma}\sigma_t^*\alpha=\int_{\Sigma}
\sigma_t^*(\cL_{v}\alpha)=\int_{\Sigma}
d(\sigma_t^*\gamma)=0,
\end{equation*}
where in the first equality we used Equation \eqref{eq:ananondiff}, in the second that $\cL_{v}\alpha=d\gamma$ for some form $\gamma$, and in the last one Stokes theorem.
\end{proof}

\begin{remark}
The sufficiency of Condition (ii) in Proposition \ref{prop:unchanged} is not surprising. By assumption $v$ preserves $\alpha$ up to an exact form, and by Stokes' theorem
the contribution given by exact forms vanishes upon integration over $\Sigma$.
\end{remark}

The following statement addresses a variation of condition (iii) in Proposition \ref{prop:unchanged}.
\begin{prop}\label{prop:til}
Let $\Sigma$ be a compact, oriented manifold without boundary of dimension $d$, $v$ a vector field on $M$ with flow $\phi_t$. If $\alpha\in \Omega^d(M)$ is locally conserved, then 
 for every fixed time $t$, one obtains a well-defined map 
$$F_t \colon [\Sigma, M]\to \RR,\;\;\; [\sigma_0]\mapsto \int_{\Sigma}(\sigma_t)^*\alpha-\int_{\Sigma}(\sigma_0)^*\alpha.$$
Here $[\Sigma, M]$ denotes the set of smooth homotopy classes of maps from $\Sigma$ to $M$, $\sigma_0:\Sigma\to M$ denotes a smooth map and $\sigma_t:=\phi_t\circ \sigma_0\colon \Sigma\to M$. 

Further, the dependence on $t$ is linear: $F_t[\sigma_0]=t\cdot c([\sigma_0])$ where $c([\sigma_0]):=\int_{\Sigma}(\sigma_0)^*(\cL_v\alpha)$.
\end{prop}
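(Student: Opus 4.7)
The plan is to reduce both assertions to the classical fact that integrals of closed forms over closed manifolds are smooth homotopy invariants. Since $\alpha$ is locally conserved, $\cL_v\alpha$ is a closed $d$-form on $M$, so the integral $\int_\Sigma \sigma^*(\cL_v\alpha)$ depends only on the smooth homotopy class of $\sigma\colon\Sigma\to M$. Indeed, for a smooth homotopy $h\colon \Sigma\times[0,1]\to M$ from $\sigma$ to $\sigma'$, Stokes' theorem on $\Sigma\times[0,1]$ (whose oriented boundary is $\Sigma\times\{1\}-\Sigma\times\{0\}$ since $\partial\Sigma=\emptyset$) gives
$$\int_\Sigma(\sigma')^*(\cL_v\alpha)-\int_\Sigma\sigma^*(\cL_v\alpha)=\int_{\Sigma\times[0,1]}h^*d(\cL_v\alpha)=0.$$
Consequently $c$ is a well-defined function on $[\Sigma,M]$.

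Next, I would integrate Equation \eqref{eq:ananondiff} from $0$ to $t$ and interchange the $s$- and $\Sigma$-integrations, which is justified by compactness of $\Sigma$, to obtain
$$F_t[\sigma_0]=\int_\Sigma\sigma_t^*\alpha-\int_\Sigma\sigma_0^*\alpha=\int_0^t\!\int_\Sigma\sigma_s^*(\cL_v\alpha)\,ds.$$
For each $s$ in the range of integration, $\sigma_s=\phi_s\circ\sigma_0$ is smoothly homotopic to $\sigma_0$ via $(x,u)\mapsto\phi_{us}(\sigma_0(x))$, which is defined on all of $\Sigma\times[0,1]$ by the $\varepsilon$-remark following Proposition \ref{prop:unchanged}. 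By the homotopy invariance established above, the inner integral is constant in $s$ and equals $c([\sigma_0])$, so $F_t[\sigma_0]=t\cdot c([\sigma_0])$.

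The well-definedness of $F_t$ on $[\Sigma,M]$ is then automatic: given homotopic representatives $\sigma_0$ and $\sigma_0'$, the formula just derived together with the well-definedness of $c$ yields $F_t[\sigma_0]=t\cdot c([\sigma_0])=t\cdot c([\sigma_0'])=F_t[\sigma_0']$.

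The computation is essentially bookkeeping; the only subtlety I expect is the verification that $(x,u)\mapsto\phi_{us}(\sigma_0(x))$ is defined on all of $\Sigma\times[0,1]$ for the values of $s$ under consideration, which is ensured by compactness of $\Sigma$ and the existence of the flow on a uniform time interval around $\sigma_0(\Sigma)$.
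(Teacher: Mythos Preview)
Your argument is correct and follows essentially the same route as the paper: integrate the identity \eqref{eq:ananondiff} from $0$ to $t$, then use closedness of $\cL_v\alpha$ and the homotopy $\sigma_s\simeq\sigma_0$ (via the flow) to conclude the inner integral is constant and equal to $c([\sigma_0])$. You spell out a few details the paper leaves implicit (the explicit Stokes computation for homotopy invariance, the explicit homotopy $\phi_{us}\circ\sigma_0$, and the deduction of well-definedness of $F_t$ from the formula), but the structure is identical.
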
 

\begin{proof}
We have
$$\int_{\Sigma}(\sigma_t)^*\alpha-\int_{\Sigma}(\sigma_0)^*\alpha=
\int_0^t\left[\frac{d}{ds}\int_{\Sigma}(\sigma_s)^*\alpha\right]ds=
\int_0^t\left[\int_{\Sigma}\sigma_s^*(\cL_v\alpha)\right]ds$$
where the last equality is obtained as in the proof of Proposition \ref{prop:unchanged}.
Now recall that $\cL_v\alpha$ is a closed form on $M$. Hence by Stokes' theorem
the term in the square bracket depends only on the homotopy class of $\sigma_s$, which agrees with the homotopy class of $\sigma_0$ since $\sigma_s=\phi_s\circ \sigma_0$.
We conclude that the above expression equals $t\cdot c([\sigma_0])$.
\end{proof}

We present an example for Proposition \ref{prop:unchanged} and Proposition \ref{prop:til}.
\begin{ep}
Let $M$ be a manifold, $v$ a vector field, and $\alpha\in \Omega^d(M)$.
Take a map $\sigma_0\colon S^d\to M$ defined on the $d$-dimensional sphere, denote by $\sigma_t$ the composition of $\sigma_0$ with the time $t$ flow $\phi_t$ of $v$. The number
$\int_{S^d}(\sigma_t)^*\alpha$ is independent of the time parameter $t$ if the following occurs: either i) $\cL_v\alpha$ is exact, or ii) $\cL_v\alpha$ is closed and $\sigma_0$ is homotopy equivalent to a constant map. This follows from Proposition \ref{prop:unchanged} (ii) and (iii).

Further, assuming that $\cL_v\alpha$ is closed, one obtains a well-defined group homomorphism
$$\pi_d(M,{x})\to \RR,\;\;\; [\sigma_0]\mapsto \int_{S^d}(\sigma_t)^*\alpha-\int_{S^d}(\sigma_0)^*\alpha$$ defined on the $d$-th homotopy group of $M$ based at some point $x$, and where the dependence on $t$ is linear. This follows from Proposition \ref{prop:til} and the following argument to show that the group homomorphism property. We denote the group multiplication of $\pi_d(M,x)$ by $*$. It is given by the following composition, where $p$ denotes a distinguished point on the sphere:
\[
f*g:(S^d,p)\to (S^d/ S^{d-1},p)=(S^d\vee S^d,p)\overset{(f\vee g)}\longrightarrow (M,x)
\]
Choosing appropriate representatives of the {respective homotopy classes} we may assume that $f,g$ and $f*g$ are smooth. 
Then for $\alpha\in\Omega^d(M)$ we calculate:
\[\int_{S^d}(f*g)^*\alpha=
\int_{S^d\backslash \{p\}\sqcup S^d\backslash \{p\}}(f\vee g)^*\alpha
=\int_{S^d}f^*\alpha+\int_{S^d}g^*\alpha.\] \end{ep}

\begin{remark}[{Kelvin circulation theorem}]\label{rem:Kelvin}
A variant of Proposition \ref{prop:unchanged} (ii) for a time-dependent vector field $v^t$ and time-dependent differential form $\alpha^t\in \Omega^d(M)$ is the following: if 
$\cL_{v^t}\alpha^t+\frac{d}{dt}\alpha^t$ is exact and $\Sigma$ has no boundary, 
then the number
$$\int_{\Sigma}(\sigma_t)^*(\alpha^t)$$ is independent of the time parameter $t$.

We mention this because \emph{the Kelvin circulation theorem in fluid mechanics can be understood as a special case of the above}. Let $v^t= \sum_iv^t_i\partial_{x_i}$ be a time-dependent vector field on $\RR^3$, and use the standard metric on $\RR^3$ to obtain from $v^t$ the 1-form $\alpha^t= \sum_iv^t_idx_i$. One computes 
$\iota_{v^t}d\alpha^t=\sum_{i,k}v^t_k\frac{\partial v^t_i}{\partial x_k} dx_i-\frac{1}{2}d\sum_i(v_i^t)^2$, so that $\cL_{v^t}\alpha^t+\frac{d}{dt}\alpha^t$ is exact if and only if
\begin{equation}\label{eq:lhseul}
\sum_{i,k}v^t_k\frac{\partial v^t_i}{\partial x_k} dx_i+\sum_i\left(\frac{d}{dt}v_i^t\right)dx_i
\end{equation}
 is exact. By the above, it then follows that $\int_{\Sigma}(\sigma_t)^*\alpha^t$ is independent of $t$.

Upon rewriting the exactness of \eqref{eq:lhseul} as $(v^t\cdot \nabla)v^t+\frac{\partial u}{\partial t}=-\nabla w$, with $\nabla$ the usual gradient in $\mathbb R^3$, we recognise the first of the isentropic Euler equations (see e.g.  \cite[page 15]{Fluid}). It is well-known that this equation implies the classical Kelvin circulation theorem (\cite[page 21]{Fluid}) which is exactly the time-independence of $\int_{\Sigma}(\sigma^*_t)(\alpha^t)$ in this case.\\

Under symmetry assumptions, our methods lead to other conserved quantities, as we now explain. 
In fluid mechanics, the time-dependent vector field $v^t$ is divergence free, i.e. $\cL_{v^t}\omega=0$ for all $t$, where $\omega=dx_1\wedge dx_2\wedge dx_3$ is the standard volume form on $\RR^3$. In the presence of a compact group of symmetries -- i.e. of an action of a compact Lie group $G$ on $\RR^3$ preserving $\omega$ and $v^t$ -- it turns out by Corollary \ref{cor:volform} that, for every fixed value of $t$, the vector field $v^t$ is the Hamiltonian vector field of a time dependent $G$-invariant Hamiltonian $1$-form of $(\mathbb R^3, \omega)$. Further, $\omega$ is exact with $G$-invariant primitive, so that the action of $G$ on $(\RR^3,\omega)$ admits a  co-momentum map \cite[\S 8.1]{FRZ}. 
The latter, by virtue of Proposition \ref{prop:conserved}, delivers further (time-independent) globally conserved quantities for $v^t$, for each value of $t$. One can then apply the above variant of Proposition \ref{prop:unchanged} (ii) to the time-dependent vector field $v^t$ and to the newly obtained globally conserved quantities.
\end{remark}

\subsection{Transgression of conserved quantities}\label{sec:transgr}
Proposition \ref{prop:unchanged} (ii) fits in the following framework. Let $\Sigma$ be a compact, oriented manifold (without boundary) and $M$ a manifold. Given a vector field $v$ on $M$, there is a naturally associated vector field $v^{\ell}$ on 
$M^{\Sigma}=C^\infty(\Sigma,M)$, 
 the space of smooth maps from $\Sigma$ to $M$. It is
 given as follows: $$v^{\ell}|_{\sigma}=\sigma^*v\in \Gamma(\sigma^*TM)=T_{\sigma}M^{\Sigma},$$ for all $\sigma\in M^{\Sigma}$. Notice that, denoting by $\phi_t$ the flow of $v$ on $M$,
the flow of $v^{\ell}$ maps $\sigma\in M^{\Sigma}$ to 
$\phi_t\circ \sigma\in M^{\Sigma}$. Similarly, associated to a differential form on $M$ there is a differential form on $M^{\Sigma}$ of lower degree. It is defined by the transgression map
\[
\ell:=\int_{\Sigma}\circ\; ev^* \colon \Omega^{\bullet}(M) \to \Omega^{\bullet-s}(M^{\Sigma})
\] 
where 
$ev\colon \Sigma \times M^{\Sigma}\to M$
is the evaluation map and 
$\int_{\Sigma}$ denotes the integration along the fiber (cf. eg. \cite[Cap. VI.4]{audin2004torus}) of the projection $\Sigma \times M^{\Sigma}\to M^{\Sigma}$.

\begin{prop}\label{prop:transconsnoG}
Let $\Sigma$ be a compact, oriented manifold (without boundary) of dimension $d$ and let $v$ be a vector field on $M$.
If $\alpha\in \Omega^k(M)$ is globally (resp. locally resp. strictly) conserved by $v$ then $\alpha^{\ell}\in \Omega^{k-d}(M^{\Sigma})$ is globally (resp. locally resp. strictly) conserved by $v^{\ell}$.

\end{prop}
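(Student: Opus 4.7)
The plan is to reduce all three statements to a single intertwining identity
\begin{equation*}
\mathcal L_{v^{\ell}}\alpha^{\ell}=(\mathcal L_{v}\alpha)^{\ell},
\end{equation*}
combined with the fact that the transgression map $\ell$ commutes with the exterior derivative, i.e.\ $d\circ \ell=\ell\circ d$. The latter is a standard property of fibre integration over a closed (compact, boundaryless) oriented fibre $\Sigma$, so it can be quoted. Granting the intertwining, the three cases become immediate: if $\mathcal L_v\alpha=0$ then $\mathcal L_{v^\ell}\alpha^\ell=0$; if $\mathcal L_v\alpha=d\gamma$ then $\mathcal L_{v^\ell}\alpha^\ell=d(\gamma^\ell)$; and if $d\mathcal L_v\alpha=0$ then $d\mathcal L_{v^\ell}\alpha^\ell=(d\mathcal L_v\alpha)^\ell=0$. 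Hence strict, global, and local conservation transfer one-to-one through $\ell$.

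The core step is therefore the intertwining identity, and I would prove it by passing to flows. Denote the flow of $v$ by $\phi_t$ and observe that, by the definition of $v^{\ell}|_{\sigma}=\sigma^*v$, the flow $\Phi_t$ of $v^{\ell}$ on $M^{\Sigma}$ is given by post-composition, $\Phi_t(\sigma)=\phi_t\circ\sigma$. Equivalently, one has the commuting square
\begin{equation*}
ev\circ(\mathrm{id}_\Sigma\times \Phi_t)=\phi_t\circ ev
\end{equation*}
of maps $\Sigma\times M^{\Sigma}\to M$. Pulling back $\alpha$ along both sides yields $(\mathrm{id}_\Sigma\times \Phi_t)^*(ev^*\alpha)=ev^*(\phi_t^*\alpha)$. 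Now integrate over $\Sigma$: since $\mathrm{id}_\Sigma\times \Phi_t$ leaves the fibre direction untouched, fibre integration is equivariant under such diffeomorphisms of the base, giving $\Phi_t^*(\alpha^{\ell})=(\phi_t^*\alpha)^{\ell}$. Differentiating at $t=0$ produces $\mathcal L_{v^{\ell}}\alpha^{\ell}=(\mathcal L_{v}\alpha)^{\ell}$, as required.

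The main obstacle I anticipate is a purely bookkeeping one, namely verifying the naturality of fibre integration with respect to base diffeomorphisms and the fact that $\ell$ is a chain map on a closed fibre, but both are standard (see e.g.\ \cite[Cap.\ VI.4]{audin2004torus} as the paper already references). A secondary point of care is that $M^{\Sigma}$ is a Fréchet manifold, so the differentiation of the family $\Phi_t^*(\alpha^{\ell})$ at $t=0$ must be interpreted in that smooth category; however this presents no genuine difficulty because on finite-dimensional test data $\Sigma\times M^\Sigma$ the identity we differentiate is a pointwise equality of smooth forms, and $\ell$ is built directly from fibre integration over the fixed compact $\Sigma$. Once these formal points are in place, the three conclusions follow by inspection.
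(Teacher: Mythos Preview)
Your argument is correct and follows the same overall architecture as the paper: both reduce everything to the intertwining identity $\cL_{v^{\ell}}\alpha^{\ell}=(\cL_{v}\alpha)^{\ell}$ together with the fact that $\ell$ is a chain map, after which the strict, global and local cases are immediate.

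The only substantive difference is in how you establish the intertwining identity. The paper does it algebraically: it records that $(\iota_v\alpha)^{\ell}=\iota_{v^{\ell}}\alpha^{\ell}$ and that $\ell$ commutes with $d$, and then invokes Cartan's formula to conclude $\cL_{v^{\ell}}\alpha^{\ell}=(\cL_v\alpha)^{\ell}$. You instead argue geometrically via flows, using the commuting square $ev\circ(\mathrm{id}_\Sigma\times \Phi_t)=\phi_t\circ ev$, naturality of fibre integration under base diffeomorphisms, and differentiation at $t=0$. Both routes are short and standard; the paper's has the advantage of also isolating the contraction identity $(\iota_v\alpha)^{\ell}=\iota_{v^{\ell}}\alpha^{\ell}$, which is used elsewhere in the paper (e.g.\ to show that $(v_H)^{\ell}$ is Hamiltonian for $H^{\ell}$), while your flow argument has the virtue of bypassing any need to check that contraction identity separately.
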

\begin{proof}
The transgression map $\ell$ commutes with de Rham differentials, and furthermore we have $(\iota_v\alpha)^{\ell}=
\iota_{v^{\ell}}\alpha^{\ell}$. Therefore it commutes with Lie derivatives in the following sense: $\cL_{v^\ell}\alpha^\ell=(\cL_v\alpha)^\ell$. Assume $\alpha$ is globally conserved. We have to show that $\cL_{v^{\ell}}\alpha^{\ell}$ is an exact form. 
Since $\alpha\in \Omega^k(M)$ is a globally conserved quantity, there is a $\gamma\in \Omega^{k-1}(M)$ with $\cL_{v}\alpha=d\gamma$. Hence
$$\cL_{v^{\ell}}\alpha^{\ell}=(\cL_v\alpha)^{\ell}=
(d\gamma)^{\ell}=d\gamma^{\ell}.$$
The other cases follow similarly.
\end{proof}

\begin{remark}
When $k=d$, 
Proposition \ref{prop:transconsnoG} recovers Proposition \ref{prop:unchanged} (ii). Indeed, let $\alpha\in \Omega^d(M)$. By Proposition \ref{prop:transconsnoG}
the function $\alpha^{\ell}$ on $M^{\Sigma}$ is invariant under the flow of $v^{\ell}$. The latter maps a point $\sigma\in M^{\Sigma}$ to 
$\phi_t\circ \sigma$, where $\phi_t$ denotes the flow of $v$ on $M$.
Finally, for all $\sigma \in M^{\Sigma}$ we have
$$\alpha^{\ell}|_{\sigma}=\big(
(\int_{\Sigma}\circ\; ev^*)(\alpha)\big)|_{\sigma}=\int_{\Sigma} (ev|_{\Sigma\times \{\sigma\}})^*\alpha=
\int_{\Sigma}\sigma^*\alpha,$$
where in the second equality we used that $ev|_{\Sigma\times \{\sigma\}}=\sigma$. 

\end{remark}

Now we specialize to a pre-$n$-plectic manifold $(M,\omega)$ together with a
vector field $v_H$ which is Hamiltonian for some $H\in\ham{n-1}$.
Notice that 
 $(v_H)^{\ell}$ is a Hamiltonian vector field of $H^{\ell}$ on $(M^{\Sigma},\omega^{\ell})$, as follows from 
$$\iota_{(v_H)^{\ell}}\omega^{\ell}=(\iota_{v_H}\omega)^{\ell}
=(-dH)^{\ell}=-dH^{\ell}.$$

A special case of Proposition \ref{prop:transconsnoG} reads:
\begin{prop}\label{prop:transcons} 
Consider a pre-$n$-plectic manifold $(M,\omega)$ together with a
vector field $v_H$ which is Hamiltonian for some $H\in\ham{n-1}$.
Let $\Sigma$ be a compact, oriented manifold (without boundary) of dimension $d$. 
If $\alpha\in \Omega^k(M)$ is a globally conserved quantity for $v_H$, then 
$$\alpha^{\ell}\in \Omega^{k-d}(M^{\Sigma})$$
 is a globally conserved quantity for $(v_H)^{\ell}$, i.e.
 $\cL_{(v_H)^{\ell}}\alpha^{\ell}$ is an exact form.
\end{prop}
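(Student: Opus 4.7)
The statement is a direct specialization of Proposition \ref{prop:transconsnoG} to the case where the vector field is Hamiltonian, so the plan is simply to invoke that proposition with $v := v_H$. To make this transparent I would first recall the two key compatibilities between transgression and the ambient differential-geometric operations: (i) transgression commutes with the de Rham differential, $\ell \circ d = d \circ \ell$, since $ev^*$ is natural and integration along the fiber over a compact boundaryless $\Sigma$ commutes with $d$; and (ii) $\iota_{v^\ell}(\beta^\ell) = (\iota_v \beta)^\ell$, which follows directly from the product structure of $ev\colon \Sigma \times M^\Sigma \to M$ and the pointwise definition $v^\ell|_\sigma = \sigma^* v$. Combining these two identities via Cartan's formula yields the central identity $\cL_{v^\ell}(\alpha^\ell) = (\cL_v \alpha)^\ell$.

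Given this identity, the proof is a single line. Since $\alpha$ is globally conserved by $v_H$, there exists $\gamma \in \Omega^{k-1}(M)$ with $\cL_{v_H}\alpha = d\gamma$. Applying $\ell$ produces
\[
\cL_{(v_H)^\ell}(\alpha^\ell) = (\cL_{v_H}\alpha)^\ell = (d\gamma)^\ell = d(\gamma^\ell),
\]
so $\gamma^\ell \in \Omega^{k-d-1}(M^\Sigma)$ is the required primitive, witnessing that $\alpha^\ell$ is globally conserved by $(v_H)^\ell$.

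I do not anticipate any genuine obstacle: the pre-$n$-plectic structure plays no role in the logical argument, and the result is essentially bookkeeping on top of Proposition \ref{prop:transconsnoG}. The only content specific to the multisymplectic context is the remark preceding the statement, namely that $(v_H)^\ell$ is itself a Hamiltonian vector field for the form $H^\ell$ on $(M^\Sigma, \omega^\ell)$; this justifies recording the specialization as a separate proposition, since it positions the conclusion inside the multisymplectic framework developed in \S \ref{sec:consmulti}. A more substantive variant — for instance, upgrading \emph{globally conserved} to \emph{strictly conserved}, or tracking equivariance of a homotopy co-momentum map under transgression — would demand genuine work, but for the statement as given the identity $\cL_{v^\ell}\alpha^\ell = (\cL_v\alpha)^\ell$ suffices.
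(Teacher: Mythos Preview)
Your proposal is correct and follows exactly the approach of the paper: Proposition \ref{prop:transcons} is stated there as a direct special case of Proposition \ref{prop:transconsnoG}, whose proof is precisely the computation $\cL_{v^\ell}\alpha^\ell=(\cL_v\alpha)^\ell=(d\gamma)^\ell=d\gamma^\ell$ that you reproduce.
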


\begin{remark}
Consider a $G$-action on $(M,\omega)$ for which $H$ is strictly preserved.
The $G$-action on $M$ gives rise to a $G$-action on
$M^{\Sigma}=C^\infty(\Sigma,M)$, simply given by $(g\cdot \sigma)(p):=g\cdot \sigma(p)$ for all $\sigma\in M^{\Sigma}$ and $p\in \Sigma$. It can be checked that for a 
infinitesimal generator $v_x$ of the action on $M$ ($x\in \g$),
the corresponding infinitesimal generator of the action on $\Sigma$ is $(v_x)^{\ell}$. Hence the lifted $G$-action preserves $\omega^{\ell}$ and $H^{\ell}$.
In \cite[\S 11]{FRZ} and \cite[\S 6]{FLRZ} it is shown that 
a co-momentum map for the action of 
$G$ on $(M,\omega)$
transgresses to a co-momentum map for the action on 
$(M^{\Sigma},\omega^{\ell})$. This is consistent with the fact, that firstly, certain components of co-momentum maps  are globally conserved quantities (Proposition \ref{prop:conserved}) and, secondly, globally conserved quantities transgress to give globally conserved quantities (Proposition \ref{prop:transcons}).
\end{remark}

 \bibliographystyle{habbrv} 
\bibliography{Conserved}
\end{document}